\long\def\symbolfootnote[#1]#2{\begingroup%
	\def\thefootnote{\fnsymbol{footnote}}\footnote[#1]{#2}\endgroup}
\newcommand{\C}{\ensuremath{\mathfrak{C}}}
\newcommand{\Z}{\ensuremath{\mathcal{Z}}}
\newcommand{\F}{\mathbb F}
\newcommand{\0}{\textbf{0}}
\newcommand{\gen}[1]{\left < #1 \right >}
\def\imod#1{\allowbreak\mkern10mu({\operator@font mod}\,\,#1)}
\newtheorem{theorem}{Theorem}[section]
\newtheorem{lemma}[theorem]{Lemma}
\newtheorem{corollary}[theorem]{Corollary}
\newtheorem{proposition}[theorem]{Proposition}
\newtheorem*{theorem*}{Theorem}
\theoremstyle{definition}
\newtheorem{definition}[theorem]{Definition}
\newtheorem{remark}[theorem]{Remark}
\newtheorem{question}[theorem]{Question}
\newtheorem{example}[theorem]{Example}
\numberwithin{equation}{section}
\newcommand{\ignore}[1]{}
\newcommand{\mynote}[1]{}
\begin{document}

\title[Nilpotent Lie algebras with two centralizer dimensions]{Nilpotent Lie algebras with two 
centralizer dimensions over a finite field}
\author{Rijubrata Kundu}
\address{Indian Institute of Science Education and Research Mohali, Knowledge City, Sector 81, 
Mohali 140 306, India}
\email{rijubrata8@gmail.com}
\author{Tushar Kanta Naik}
\address{School of Mathematical Sciences, National Institute of Science Education and Research, Bhubaneswar, An OCC of Homi Bhabha National Institute, P. O. Jatni, Khurda 752050, Odisha, India.}
\email{mathematics67@gmail.com/tushar@niser.ac.in}
\author{Anupam Singh}
\address{Indian Institute of Science Education and Research Pune, Dr. Homi Bhabha Road, Pashan, Pune 411 008, India}
\email{anupamk18@gmail.com}
\thanks{Rijubrata Kundu thanks IISER Mohali for the Institute Post-Doctoral Fellowship. Tushar Kanta Naik would like to acknowledge partial support from NBHM via grant 0204/3/2020/R\&D-II/2475. Anupam Singh is funded by SERB through CRG/2019/000271 for this research.}
\subjclass[2010]{17B05, 17B30}
\today
\keywords{Nilpotent Lie algebras, Breadth type, Semifields}

\begin{abstract}
A result of Barnea and Isaacs states that if $L$ is a finite dimensional nilpotent Lie algebra 
with exactly two distinct centralizer dimensions, then nilpotency class of $L$ is either $2$ or $3$. 
In this article, we classify all such finite dimensional $3$-step nilpotent Lie algebras over a
finite field.
\end{abstract}

\maketitle

\section{Introduction}\label{introduction}

Let $L$ be a finite dimensional nilpotent Lie algebra. A Lie algebra $L$ is said to be of {\bf 
breadth type} $(0=n_0, n_1, n_2,\cdots, n_r)$ if $n_i$ are the distinct breadths (codimension of 
centralizers) of elements of $L$ written in increasing order (see~\cite{kns}). This article deals 
with the classification of such algebras of breadth type $(0, n)$ (that is, two centralizer 
dimensions) over a finite field where $n\geq 1$ is an integer. Let $L$ be a non-abelian such algebra 
over $\mathbb F_q$ of breadth type $(0, n)$. Barnea and Isaacs (see Theorem B,~\cite{bi}) proved 
that such Lie algebras have nilpotency class at most $3$. The Lie algebras $L$ of breadth type 
$(0,1), (0,2)$ and $(0,3)$ (in odd characteristics) over finite fields are classified in~\cite{kns} 
following the work in~\cite{kms,swk,re}. It turns out that the Lie algebras of type $(0,1)$ and 
$(0,3)$ are of class $2$. Whereas the type $(0,2)$ could be of class $2$ and $3$ both (see Theorem 
1.1, \cite{kns}). Furthermore, there is a unique type $(0,2)$ stem Lie algebra of class $3$ (in odd 
characteristics). Thus, it raises the following questions: 
\begin{question}
\begin{enumerate}
\item[(a)]  When $n$ is odd, is it the case that any such Lie algebra of type $(0, n)$ of class $2$? 
\item[(b)] Can we give a complete classification of such algebras of class $3$ (which would exist when $n$ is even)? 
\item[(c)] Can we give the classification of class $2$ such algebras? 
\end{enumerate}
\end{question}
\noindent In this article we answer the first two questions completely, and the third part remains 
to be solved.

The following two theorems are the main results of this paper.
\begin{theorem}\label{odd_dimension_three_step_nilpotent}
Let $L$ be a finite dimensional $3$-step nilpotent Lie algebra of breadth type $(0, n)$ over 
$\F_{q}$, and $q\geq 3$. Then $n$ is necessarily an even integer.
\end{theorem}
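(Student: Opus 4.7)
The plan is to couple Jordan-theoretic information about $\mathrm{ad}_x$ with an alternating form on the $2$-step quotient $L/L_3$, and then conclude via scalarization. Since $L^4 = 0$, every $x \in L$ satisfies $\mathrm{ad}_x^3 = 0$, so the Jordan blocks of $\mathrm{ad}_x$ have sizes at most $3$. Writing $a_x, b_x, c_x$ for the numbers of blocks of sizes $3, 2, 1$, one obtains $\dim L = 3a_x + 2b_x + c_x$, $\dim C_L(x) = a_x + b_x + c_x$, and $b(x) = 2a_x + b_x$. The $(0, n)$-hypothesis therefore gives $n = 2a_x + b_x$ for every $x \in L \setminus Z(L)$, whence $n \equiv b_x \pmod{2}$. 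In particular, to prove $n$ even it is enough to produce an invariant of $L$ that is visibly even and visibly equal to $n$.

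Consider next the alternating bilinear form $D \colon L \times L \to L_2/L_3$ defined by $D(u, v) = [u, v] + L_3$. Its radical is $M = \{u \in L : [u, L] \subseteq L_3\}$, the preimage in $L$ of $Z(L/L_3)$; note that $L_2 + Z(L) \subseteq M$. For each $\lambda \in (L_2/L_3)^{\ast}$ the composition $\lambda \circ D$ is a scalar-valued alternating form on $L$ and hence has even rank as a bilinear form. Because $q \geq 3$, the ``bad'' $\lambda$'s (those for which $\mathrm{rad}(\lambda \circ D)$ strictly contains $M$) form a finite union of proper subspaces of $(L_2/L_3)^{\ast}$, which cannot cover the whole space; so we can pick $\lambda$ with $\mathrm{rad}(\lambda \circ D) = M$, and for such $\lambda$ one has $\dim(L/M) = \mathrm{rank}(\lambda \circ D)$, hence $\dim(L/M)$ is even.

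The crux is to identify $\dim(L/M)$ with $n$; combined with the previous paragraph this forces $n$ to be even. Using the nondegenerate alternating form induced by $D$ on $L/M$, together with the bilinear pairing $\bar{\gamma} \colon (L/L_2) \times (L_2/L_3) \to L_3$ (whose ``row rank'' in the second argument is uniformly $n$ by the breadth hypothesis) and a polarization argument (valid for $q \geq 3$ since the characteristic is odd) that produces an element $x \in L$ with $\mathrm{ad}_x^2 \neq 0$ from $L_3 \neq 0$, one pins down the required dimension identity. The main obstacle is ruling out the possibility that $M$ is strictly larger than $L_2 + Z(L)$, which would \emph{a priori} give $\dim(L/M) < n$: such extra elements would be $u \in L$ with $u \notin L_2 + Z(L)$ and $[u, L] \subseteq L_3$, forcing an $n$-dimensional subspace of $L_3$ to absorb $[u, L]$, and a careful analysis using uniformity of breadth together with the Jacobi-derived identity $\mathrm{ad}_x \mathrm{ad}_u = \mathrm{ad}_{[x, u]} + \mathrm{ad}_u \mathrm{ad}_x$ shows this cannot occur. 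Handling the delicate case $q = 3$, where $2$-Engel pathologies threaten the polarization step, constitutes the most technically intricate part of the argument.
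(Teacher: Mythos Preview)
Your proposal contains a genuine circularity in the second paragraph and an unfulfilled promise in the third.

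\medskip

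\textbf{The alternating-form step is circular.} You assert that the ``bad'' $\lambda$'s form a finite union of proper subspaces of $(L_2/L_3)^*$ which therefore cannot cover everything when $q\ge 3$. But each scalar alternating form on an odd-dimensional space is automatically degenerate. Hence if $\dim(L/M)$ were odd, \emph{every} $\lambda$ would be bad: the union $\bigcup_{\bar u\neq 0} D(\bar u,L)^{\perp}$ is then all of $(L_2/L_3)^*$. The subspaces $D(\bar u,L)^\perp$ are indeed proper, but there are $(q^{\dim(L/M)}-1)/(q-1)$ of them, not a bounded number, and nothing prevents their union from being everything. (Concretely: on a $3$-dimensional space with $D(e_i,e_j)=f_k$ for $\{i,j,k\}=\{1,2,3\}$, the form $\lambda\circ D$ has kernel $(c,-b,a)$ for $\lambda=(a,b,c)$, so every $\lambda$ is bad.) Thus the existence of a $\lambda$ with $\mathrm{rad}(\lambda\circ D)=M$ already presupposes that $\dim(L/M)$ is even, which is what you are trying to prove.

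\medskip

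\textbf{The identification $\dim(L/M)=n$ is not carried out.} You acknowledge that ruling out $M\supsetneq L_2+Z(L)$ is the main obstacle, but offer only the phrases ``a careful analysis'' and ``a polarization argument'' in place of an argument. This is precisely where the substance lies: in the paper's development it corresponds to the nontrivial counting argument of Proposition~\ref{dimension_L_over_L'} (giving $\dim(L/L')=n$) together with Corollary~\ref{centralizer_info_L}. Without it, even a corrected version of the alternating-form idea would not reach the conclusion. Also, your parenthetical ``valid for $q\ge 3$ since the characteristic is odd'' is false: $q=4,8,\dots$ have characteristic $2$, and the theorem is stated for all $q\ge 3$.

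\medskip

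\textbf{What the paper actually does.} The paper does not use Jordan blocks or scalarised alternating forms. After reducing to the stem case, it proves by a counting argument that $\dim(L/L')=n$, whence $C_L(x)\cap L'=Z(L)$ for every $x\in L\setminus L'$. Setting $m=\dim(L'/Z(L))$, it then picks $x,y$ with $[x,y]\notin Z(L)$ and uses the Jacobi identity twice: if $n>2m$ one finds $w\notin L'$ centralising both $\bar x,\bar y$ in $L/Z(L)$, forcing $[x,y]\in C_L(w)\cap L'=Z(L)$; if $n<2m$ one finds $w\notin L'$ in $(C_L(x)+L')\cap(C_L(y)+L')$ and reaches the same contradiction. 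Hence $n=2m$ is even. Your Jordan-block bookkeeping $n=2a_x+b_x$ is correct but is never linked to the rest of the argument, and the approach as written does not close.
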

\noindent Thus, when $n$ is odd the nilpotency class of such Lie algebras is $2$. Furthermore, when 
$n$ is even, we have the following:

	
\begin{theorem}\label{even_dimension_three_step_nilpotent}
Suppose that $n \geq 2$ is an even integer, and $q\geq 3$. Then there exists a unique  finite 
dimensional $3$-step nilpotent stem Lie algebra of breadth type $(0, n)$ over $\F_q$.
\end{theorem}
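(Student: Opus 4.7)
The plan is to split the proof into three parts: structural analysis, uniqueness, and existence. Let $L$ be any $3$-step nilpotent stem Lie algebra of breadth type $(0,n)$ over $\F_q$ with $n\geq 2$ even. Write the lower central series $L\supsetneq L':=[L,L] \supsetneq L'':=[L,L'] \supsetneq 0$, so $L''$ is central and nonzero. For $x\in L'\setminus Z(L)$, the image of $\textup{ad}_x$ lies in $L''$ and has dimension $n$, giving $\dim L''\geq n$. Running the analogous argument for the $2$-step nilpotent quotient $L/L''$, combined with the stem condition $Z(L)\subseteq L'$ and the fact that every non-central element of $L$ has breadth exactly $n$, I first pin down $\dim L''$, $\dim Z(L)$, $\dim L'$, and $\dim L$ as explicit functions of $n$.

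Once these dimensions are fixed, set $V=L/L'$, $W=L'/L''$, $U=L''$. The Lie bracket descends to an alternating map $\beta\colon V\times V\to W$ and a bilinear map $\alpha\colon V\times W\to U$, linked by the Jacobi identity. The breadth-type condition translates into rigidity statements: $\beta$ is nondegenerate (forcing $\dim V$ even, consistent with Theorem~\ref{odd_dimension_three_step_nilpotent}), and $\alpha$ has fiberwise full rank in each of its variables—the surjectivity in the $V$-direction comes from $[-,\bar w]$ applied to $x\notin L'$, the surjectivity in the $W$-direction from $[\bar x,-]$ applied to $x\in L'\setminus Z(L)$. Since nondegenerate alternating forms over $\F_q$ are unique up to equivalence, uniqueness of $L$ reduces to uniqueness of $\alpha$ up to base changes on $V$ preserving $\beta$, i.e.\ an orbit question under $\textup{Sp}(V,\beta)$. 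Jacobi compatibility together with the hypothesis $q\geq 3$, which furnishes the usual rescalings in characteristic $\neq 2$, should force $\alpha$ into a canonical normal form by successive symplectic base changes on $V$ and compatible changes on $W$ and $U$.

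For existence, the plan is to construct the standard model explicitly: realize $(V,W,\beta)$ as the unique $2$-step Heisenberg-type nilpotent Lie algebra of the appropriate dimension dictated by Step~1, and then form the central extension by $U$ with bracket defined through the canonical $\alpha$ forced by Jacobi. A direct computation of $[x,L]$ for representative $x\in L\setminus L'$ and $x\in L'\setminus L''$ will verify that the resulting $L$ is $3$-step nilpotent, stem, and of breadth type $(0,n)$. This will both complete the existence part and confirm that the normal form produced in Step~2 is actually realized.

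The principal obstacle will be the rigidity of $\alpha$ in the uniqueness step. The breadth-type constraints characterize $\alpha$ only fiberwise, and it is the interaction with $\beta$ through Jacobi—together with orbit rigidity for pairs of bilinear forms over finite fields of odd characteristic—that collapses the classification to a single isomorphism class. Carrying out this normalization cleanly, without hidden moduli appearing from the $\textup{Sp}(V,\beta)$-action, is the main technical heart of the theorem and is where the hypothesis $q\geq 3$ is indispensable.
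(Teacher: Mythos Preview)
There is a genuine gap in your uniqueness step. You treat $\beta\colon V\times V\to W$ as though it were a scalar-valued alternating form, invoking the uniqueness of nondegenerate alternating forms and speaking of ``an orbit question under $\textup{Sp}(V,\beta)$.'' But with $n=2m$ one has $\dim W=\dim(L'/\gamma_3(L))=m$, so for $n\geq 4$ the map $\beta$ is a \emph{system} of $m\geq 2$ alternating forms on $V$, not a single one, and the symbol $\textup{Sp}(V,\beta)$ has no evident meaning. Such $W$-valued alternating maps with the fiberwise surjectivity you describe are \emph{not} unique up to equivalence: the semifield construction in the paper produces, for every finite semifield $F$ of order $q^{m}$, a $2$-step nilpotent Camina Lie algebra $L(F)$ with exactly the parameters of $L/Z(L)$, and non-isotopic semifields give non-isomorphic algebras. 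Dickson's proper commutative semifields already furnish such examples, so at the level of the triple $(V,W,\beta)$ there are genuinely distinct candidates and no off-the-shelf normal form collapses them.

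What actually singles out $L/Z(L)\cong\mathrm{U}_3(q^{m})$ is not linear algebra on $\beta$, but the extra fact---forced by the $3$-step structure and the breadth-type hypothesis on $L$ itself---that centralizers of non-central elements in $L/Z(L)$ are \emph{abelian}. Translated through the semifield dictionary, this forces the middle nucleus of the underlying (commutative, up to isotopy) semifield to equal the whole semifield, hence the semifield is a field and $\beta$ becomes rigid. Only after this identification can one lift a presentation of $\mathrm{U}_3(q^{m})$ to $L$ and kill the remaining central structure constants one family at a time; the hypothesis $q\geq 3$ enters there through the identity $\sum_{r\in\F_q\setminus\{0\}}r=0$, not merely through rescalings in odd characteristic. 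Your outline skips this entire mechanism, and without it the ``orbit rigidity for pairs of bilinear forms'' you anticipate does not hold.
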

\noindent This completes the classification of all 3-step nilpotent Lie algebras of breadth type $(0,n)$ (see Section 2.1, \cite{kns}). 

As a 
consequence of the proof of the Theorem~\ref{odd_dimension_three_step_nilpotent}, we obtain that if $L$ is a 3-step nilpotent stem 
Lie algebra of breadth type $(0,2m)$ then $\dim(L/L')=2m$ and $\dim(L'/Z(L))=m$ (see 
Proposition~\ref{dimension_L_over_L'} and Corollary~\ref{dimension_derived_over_center}). 
Further, we introduce the notion of semifield Lie algebras over finite fields (which are certain 
$2$-step Camina Lie algebras). We connect these Lie algebras to our problem and get further 
important structural 
results amongst which we prove that $L/Z(L)\cong \mathrm{U}_3(q^m)$ as $\F_q$-Lie algebras (see 
Theorem~\ref{central_quotient_recognition}) and $Z(L)=\gamma_3(L)$ (see 
Proposition~\ref{dimension_center}). These structural properties lead us to a complete 
classification of such Lie algebras given in Theorem~\ref{even_dimension_three_step_nilpotent}.

Suppose $m\geq 1$. Let $\mathfrak{g}_m$ be the Lie algebra whose underlying vector space is 
$\F_{q^m}^5$ over $\F_q$ (of $5m$ dimension), and the Lie bracket $[,]$ defined by
$$[(a,b,c,d,e), (x,y,z,u,v)] = (0, 0, ay-bx, ay-bx+2cx-2az, ay-bx+2bz-2cy).$$
\noindent Combining with the Theorem~\ref{even_dimension_three_step_nilpotent}, we obtain the 
following:
\begin{corollary}\label{Corr}
Let $L$ be a finite dimensional $3$-step nilpotent Lie algebra over a finite field $\F_q$ of 
characteristic not equal to $2$. For $n=2m\geq 2$ even, if $L$ has breadth type $(0,2m)$ then $L\cong 
\mathfrak{g}_m$.
\end{corollary}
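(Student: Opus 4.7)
The plan is to derive the corollary directly from Theorem~\ref{even_dimension_three_step_nilpotent} by exhibiting $\mathfrak{g}_m$ as an explicit model of the unique $3$-step nilpotent stem Lie algebra of breadth type $(0,2m)$ over $\F_q$. All the work therefore reduces to verifying that $\mathfrak{g}_m$ itself is a $3$-step nilpotent stem Lie algebra of breadth type $(0,2m)$; the uniqueness clause of Theorem~\ref{even_dimension_three_step_nilpotent} then yields $L\cong \mathfrak{g}_m$.

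First, I would confirm that the given formula defines a Lie algebra on $\F_{q^m}^5$. Skew-symmetry is visible from the expression. For the Jacobi identity the key observation is that every bracket has vanishing first two coordinates; writing $W=[X,Y]$, only $W_3=X_1Y_2-X_2Y_1$ survives among its first three entries, so $[W,Z]$ collapses to $(0,0,0,\,2W_3Z_1,\,-2W_3Z_2)$. Cyclic summation in $X,Y,Z$ then reduces to the elementary identity $\sum_{\mathrm{cyc}}(X_1Y_2-X_2Y_1)Z_1=0$, which expands to zero term by term.

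Next, I would compute the lower central series and the centralizers. Since the bracket lands in $\{(0,0,*,*,*)\}$, we have $\mathfrak{g}_m'\subseteq \{(0,0,*,*,*)\}$, and bracketing standard basis vectors produces generators such as $(0,0,1,1,1)$, $(0,0,0,-2,0)$, $(0,0,0,0,2)$, which span the full $3m$-dimensional subspace (using $\mathrm{char}\,\F_q\neq 2$). A second bracket yields $\gamma_3(\mathfrak{g}_m)=\{(0,0,0,*,*)\}$ of $\F_q$-dimension $2m$, and one more bracket vanishes identically, giving $3$-step nilpotency. For the centralizer, the formula shows that $X=(a,b,c,*,*)$ and $Y=(x,y,z,*,*)$ commute iff the three minors $ay-bx$, $az-cx$, $bz-cy$ of $\bigl(\begin{smallmatrix}a&b&c\\ x&y&z\end{smallmatrix}\bigr)$ vanish, i.e.\ iff $(x,y,z)$ is $\F_{q^m}$-proportional to $(a,b,c)$. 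For $X$ non-central (i.e.\ $(a,b,c)\neq 0$) the centralizer is $3$-dimensional over $\F_{q^m}$, hence $3m$-dimensional over $\F_q$, giving breadth $5m-3m=2m$; for $(a,b,c)=0$ the element is central with breadth $0$. This simultaneously identifies the breadth type as $(0,2m)$ and shows $Z(\mathfrak{g}_m)=\{(0,0,0,*,*)\}=\gamma_3(\mathfrak{g}_m)\subseteq \mathfrak{g}_m'$, so $\mathfrak{g}_m$ is stem.

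The main obstacle is not conceptual but organizational: one must recognize the commutation condition as the rank-one condition on a $2\times 3$ matrix over $\F_{q^m}$ and correctly convert $\F_{q^m}$-dimensions into $\F_q$-dimensions, which is precisely what forces the breadth to equal $2m$ rather than $2$. The hypothesis $\mathrm{char}\,\F_q\neq 2$ enters to guarantee that the $2$'s appearing in the structure constants are nonzero, so that $\mathfrak{g}_m'$ and $\gamma_3(\mathfrak{g}_m)$ actually attain their claimed dimensions. Once these properties are in hand, Theorem~\ref{even_dimension_three_step_nilpotent} supplies the isomorphism $L\cong \mathfrak{g}_m$ for free.
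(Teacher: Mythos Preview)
Your proposal is correct and follows essentially the same route as the paper: one invokes Theorem~\ref{even_dimension_three_step_nilpotent} for uniqueness and then verifies (as the paper does in Theorem~\ref{example_3_step_nilpotent_over_finite_fields} and Proposition~\ref{Liealgebra_g_m}) that $\mathfrak{g}_m$ is a $3$-step nilpotent stem Lie algebra of breadth type $(0,2m)$. Your centralizer computation via the rank-one condition on the $2\times 3$ matrix over $\F_{q^m}$ is a clean unification of the paper's case split between $\mathfrak{g}_m'\setminus Z(\mathfrak{g}_m)$ and $\mathfrak{g}_m\setminus\mathfrak{g}_m'$, but the overall argument is the same.
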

\noindent We prove Theorem~\ref{odd_dimension_three_step_nilpotent} in Section~\ref{section 2} and 
Theorem~\ref{even_dimension_three_step_nilpotent} in Section~\ref{section 5}. We mention that these 
results are a Lie-algebra analog of some similar results in $p$-groups (see~\cite{nky}). 

Our method depends on certain results which involve finite fields. This leaves us to wonder if 
Theorem~\ref{odd_dimension_three_step_nilpotent} is true over arbitrary field. In general, can we 
classify finite dimensional 3-step nilpotent Lie algebras of breadth type $(0,n)$ over an arbitrary 
field? Nilpotent Lie algebras of small dimensions are classified (see for example~\cite{dg, jnp, 
se, ro, sc,sgs}). In higher dimensions the classification problem is difficult. We hope our results would pave a 
way to look at this more general problem.

\subsection{Notations}
We set some notations that will be used throughout the paper. A Lie algebra is assumed to be finite 
dimensional over a field $k$. Let $\F_q$ denote the finite field with $q$ elements. For a Lie 
algebra $L$, the center of $L$ is denoted by $Z(L)$ and the derived subalgebra $[L, L]$ by $L'$. A 
$c$-step nilpotent Lie algebra is a nilpotent Lie algebra of nilpotency class $c$. For a lie algebra 
$L$, denote $L^{0}=L, L^1=[L,L]=L'$ and $L^i=[L,L^{i-1}]$ for all $i\geq 1$. Further we will denote 
$L^i$ by $\gamma_{i+1}(L)$ for every $i\geq 0$. Finally, $\mathrm{U}_n(q^m)$ denotes the Lie algebra 
(over $\F_q$) of $n\times n$ strictly upper triangular matrices with entries in $\F_q$.

\section{Structure of $3$-step nilpotent Lie algebra of type $(0,n)$}\label{section 2}

Throughout this section, let $L$ be a finite dimensional nilpotent Lie algebra over a finite field 
$\F_q$ with $q$ elements, where $q\geq 3$. Before moving any further, we mention that as a 
consequence of isoclinism of Lie algebras (see Section 2, \cite{kns} for details), it is enough to 
work with a stem (or, pure) Lie algebra. A Lie algebra $L$ over a field $k$ is said to be stem (or, 
pure) if $Z(L)\leq L'$. This key reduction is due to the fact that isoclinic Lie algebras have the 
same breadth type. Further, any isoclinism class contains a stem Lie algebra.
	
We start with a lemma.
\begin{lemma}\label{lem2}
Let $L$ be a 3-step nilpotent stem Lie algebra of breadth type $(0,n)$ for some $n\geq 1$. Then 
$\dim\;(L'/Z(L))<n$.
\end{lemma}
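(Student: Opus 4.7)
The plan is to argue by contradiction: suppose $\dim(L'/Z(L))\geq n$, and then produce an element whose existence is incompatible with the Jacobi identity.

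First I would record the free structural consequences of $3$-step nilpotency. Since $\gamma_4(L)=0$ we have $\gamma_3(L)\subseteq Z(L)$, and the standard inclusion $[\gamma_2(L),\gamma_2(L)]\subseteq \gamma_4(L)=0$ shows that $L'$ is abelian. If $L'=Z(L)$ the claim is trivial, so assume $L'\supsetneq Z(L)$ and pick $w_1,\ldots,w_n\in L'$ that are linearly independent modulo $Z(L)$. Then every nonzero combination $w_c=\sum c_iw_i$ lies in $L'\setminus Z(L)$, so $\dim C_L(w_c)=\dim L-n$ by the breadth hypothesis, and moreover $\mathrm{ad}_{w_c}(L)\subseteq \gamma_3(L)$.

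Next I would produce a well-chosen $y\in L$ by a simple counting argument. Since $C_L(w_c)$ depends only on the projective class of $c$, there are at most $(q^n-1)/(q-1)$ such centralizers, each of size $q^{\dim L-n}$; hence
\[
\Bigl|\bigcup_{0\neq c\in \F_q^n}C_L(w_c)\Bigr|\;\leq\;\frac{q^n-1}{q-1}\cdot q^{\dim L-n}\;<\;q^{\dim L}=|L|,
\]
which holds for $q\geq 2$. So some $y\in L$ avoids this union, meaning $[w_1,y],\ldots,[w_n,y]$ are linearly independent elements of $\gamma_3(L)$. In particular $y\notin Z(L)$, so $\dim[y,L]=n$; but $[y,L]$ already contains the $n$ independent vectors $[w_i,y]=-[y,w_i]\in \gamma_3(L)$, so these must span $[y,L]$. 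This forces $[y,L]\subseteq \gamma_3(L)\subseteq Z(L)$, i.e., $y\in Z_2(L)$.

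The contradiction is then immediate from the Jacobi identity: for any $u,v\in L$,
\[
[y,[u,v]] \;=\; -[u,[v,y]]-[v,[y,u]],
\]
and both terms on the right vanish because $[v,y],[y,u]\in [y,L]\subseteq Z(L)$. Hence $[y,L']=0$, which directly contradicts $[y,w_1]\neq 0$. Therefore $\dim(L'/Z(L))<n$.

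The only step that requires care is the pigeonhole argument producing $y$, and it is essentially automatic for $q\geq 2$; the remaining steps are direct applications of $3$-step nilpotency together with the uniform codimension hypothesis.
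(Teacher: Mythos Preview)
Your argument is correct, and it is genuinely different from the paper's proof. The paper double-counts a set
\[
X=\{(\langle x+L'\rangle,\langle y+Z(L)\rangle)\mid x\in L\setminus L',\ y\in L'\setminus Z(L),\ [x,y]=0\},
\]
first by the $y$-coordinate (using that each $C_L(y)$ has fixed codimension $n$), then by the $x$-coordinate, where the key step is a generating-set argument showing $\dim[x,L']<\dim[x,L]=n$ for every $x\in L\setminus L'$; the resulting inequalities collide only for $q\geq 3$. Your route avoids the double count entirely: a single union bound over the $(q^n-1)/(q-1)$ centralizers of the $w_c$ produces a $y$ with $[y,w_1],\ldots,[y,w_n]$ independent; since $\dim[y,L]=n$, these brackets already span $[y,L]$, forcing $[y,L]\subseteq\gamma_3(L)\subseteq Z(L)$, and then Jacobi gives $[y,L']=0$, contradicting $[y,w_1]\neq 0$.

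What this buys you: your argument is shorter, bypasses the ``$[x,L']=[x,L]\Rightarrow x\in Z(L)$'' generating-set lemma, and actually works for $q=2$ as well (the inequality $(q^n-1)/(q-1)<q^n$ holds for every $q\geq 2$), whereas the paper's counting needs $q\geq 3$. The paper's approach, in exchange, establishes the intermediate fact $\dim[x,L']<n$ for all $x\in L\setminus L'$, which is of some independent structural interest.
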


\begin{proof}
Let $\dim\;(L/L')=m$. Since the nilpotency class of $L$ is $3$, $L'$ is abelian. Thus for any $y 
\in L' \setminus Z(L)$, we have $L'\leq C_L(y)$. Since $\dim\;(L/C_L(y))=n$, we conclude that $n\leq 
m$. Suppose that $\dim\;(L'/Z(L))=k$ with $k \geq n$. Consider the following set:
\[ X=\{(\langle x+L'\rangle, \langle y+Z(L)\rangle ) \mid x \in L \setminus L', \;y \in 
L'\setminus Z(L), \;[x,y]=0\}. \]

\noindent Let $x+L'=x_1+L'$ and $y+Z(L)=y_1+Z(L)$. Then there exists $x_2\in L'$ and $y_2\in 
Z(L)$ such that $x=x_1+x_2$ and $y=y_1+y_2$. Therefore,
$$[x,y]= [x_1+x_2,y_1+y_2] = [x_1,y_1]+[x_2,y_1]+[x_1+x_2,y_2]=[x_1,y_1].$$ 
Hence $X$ is well defined. 

Since $\dim\;(L'/Z(L))=k$, there are $\frac{q^k-1}{q-1}$ subalgebras of dimension $1$ in 
$L'/Z(L)$. Since $L'$ is abelian, for each $y \in L' \setminus Z(L)$, $L' \subseteq C_L(y)$ and 
$\dim\;(C_L(y)/L')=m-n$. As there are $\frac{q^{m-n}-1}{q-1}$ subalgebras of dimension $1$ in  
$C_L(y)/L'$, we get 
$$|X|=\frac{(q^k-1)(q^{m-n}-1)}{(q-1)^2}.$$

On the other hand, fix $x \in L \setminus L'$. Set $C_{L'}(x):=C_L(x)\cap L' $. We know that 
$\dim\;([x, L']) \leq \dim\;([x,L]) = n$. Suppose $\dim\;([x,L']) = \dim\;([x,L])=n$, that is, 
$[x,L'] = [x,L]$. Let $\{x_1,\ldots, x_m\}$ be a minimal generating set of $L$ over $L'$. Then 
$[x,x_i] = [x,h_i]$ for some $h_i\in L'$ and  hence $[x,x_i-h_i]=0$ for $1 \le i \le m$. Hence 
$\{x_1-h_1,\ldots, x_m-h_m\}$ is also a generating set for $L$ over $L'$, which centralizes $x$. 
This implies that $x \in Z(L)$, a contradiction. Therefore, $\dim\;([x,L'])< \dim\;([x,L]) = n$ and  
$\dim\;(C_{L'}(x)/Z(L)) \geq k+1-n \geq 1$. Thus, there are at least $(q^{k+1-n}- 1)/(q-1)$ 
subspaces of dimension $1$ in $L'/Z(L)$ with $[x,y]=0$. Hence, we  get
		$$|X| \geq \frac{(q^m-1)(q^{k+1-n}-1)}{(q-1)^2}.$$
		Comparing the size of $X$, we get
		$$(q^m - 1)(q^{k+1-n}- 1) \leq (q^{m-n} - 1)(q^k- 1),$$
		which on simplification gives
		$$q+q^{-k}+q^{n-m} \leq 1+q^{1-m}+q^{n-k}<3,$$
		a contradiction to our choice of $q$. Hence  dim$(L'/Z(L))<n$.
	\end{proof}
	
	\begin{definition}[\textbf{Relative breadth}]
		Let $L$ be a finite dimensional Lie algebra, $I$ be a non-trivial ideal of $L$ and $x \in L$. Then the {\it relative breadth} (with respect to $I$), $b_I(x)$ of $x$  is defined as rank of the adjoint operator 
		$$ad_{x}\mid _I: I\rightarrow I\;\; \text{given by}\;\; y\mapsto [x, y].$$
		
		The relative breadth of $I$ in $L$ is defined as $$b_I(L) = \text{max}\{b_I(x) \mid x \in L\}.$$
	\end{definition}
	
	\begin{lemma}\label{lem3}
		Let $L$ be a 3-step nilpotent stem Lie algebra of breadth type $(0,n)$ for some $n\geq 1$. Suppose  $B_{L'}=\{x \in L \mid C_{L'}(x)=Z(L)\}$. Then $\langle B_{L'}\rangle = L.$
	\end{lemma}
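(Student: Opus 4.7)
The strategy is to prove the $\F_q$-linear span of $B_{L'}$ equals $L$, which forces $\langle B_{L'}\rangle=L$.

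Since $L$ is $3$-step nilpotent, $[L',L']\subseteq\gamma_4(L)=0$ and $[L,L']=\gamma_3(L)\subseteq Z(L)$, so the Lie bracket descends to a well-defined $\F_q$-bilinear pairing
$$\bar\beta\colon V\times W\longrightarrow\gamma_3(L),\qquad V:=L/L',\ W:=L'/Z(L).$$
For $x\in L$ with image $\bar x\in V$, the identification $C_{L'}(x)/Z(L)=\ker\bar\beta(\bar x,\cdot)$ shows that $x\in B_{L'}$ if and only if $\bar\beta(\bar x,\cdot)\colon W\to\gamma_3(L)$ is injective; in particular this membership depends only on $\bar x$. Hence $B_{L'}$ is a union of complete $L'$-cosets and is closed under nonzero scalar multiplication, i.e.\ $B_{L'}=\pi^{-1}(\bar B)$ for the projection $\pi\colon L\to V$ and some union of punctured lines $\bar B\subseteq V\setminus\{0\}$. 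If $B_{L'}$ is non-empty, then differences within a single coset show $L'\subseteq\mathrm{span}(B_{L'})$, so the problem reduces to showing that $\bar B$ spans $V$.

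I would then argue by contradiction: suppose $\bar B$ is contained in a hyperplane $\bar H$ of $V$ (this subsumes the degenerate case $\bar B=\emptyset$). For every $\bar x\in V\setminus\bar H$, the map $\bar\beta(\bar x,\cdot)$ has non-trivial kernel, so $\bar x\in N(\bar y):=\{\bar v\in V:\bar\beta(\bar v,\bar y)=0\}$ for some non-zero $\bar y\in W$. Since $N(\bar y)$ depends only on the $\F_q$-line through $\bar y$, this yields
$$V\setminus\bar H\;\subseteq\;\bigcup_{\bar y\neq 0}N(\bar y),$$
a union effectively indexed by the $(q^k-1)/(q-1)$ lines in $W$, where $k:=\dim W$. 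Breadth type $(0,n)$ forces $\dim C_L(y)=\dim L-n$ for every $y\in L'\setminus Z(L)$, and $L'\subseteq C_L(y)$ gives $\dim N(\bar y)=m-n$ where $m:=\dim V$.

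The heart of the argument is then the following count. By Lemma~\ref{lem2}, $k\le n-1$, and hence
$$\Bigl|\bigcup_{\bar y\neq 0}N(\bar y)\Bigr|\;\le\;1+\frac{q^k-1}{q-1}\bigl(q^{m-n}-1\bigr)\;\le\;1+\frac{(q^{n-1}-1)(q^{m-n}-1)}{q-1},$$
whereas $|V\setminus\bar H|=q^{m-1}(q-1)$. The forced inequality reduces, after a short manipulation, to $q^m(q-2)/(q-1)\le 1$, which contradicts $q\ge 3$ (note $m\ge 2$ since $L$ is $3$-step). The principal obstacle is precisely this arithmetic comparison, in the spirit of Lemma~\ref{lem2}: the hypothesis $q\ge 3$ is essential, and the estimate must handle both $\bar B=\emptyset$ and $\bar B$ non-empty but not spanning in a uniform way, which the hyperplane reduction achieves.
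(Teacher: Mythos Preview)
Your argument is correct and is essentially the paper's proof transported to the quotient $V=L/L'$: the paper sets $S=L\setminus B_{L'}=\bigcup_{h\in L'\setminus Z(L)} C_L(h)$, bounds $|S|<|L'|q^{m-1}$ via Lemma~\ref{lem2}, and then observes that if $\langle B_{L'}\rangle$ were proper it would also have size at most $|L'|q^{m-1}$, giving $|L|<2|L'|q^{m-1}<|L|$ for $q\ge 3$; your $\bigcup_{\bar y}N(\bar y)$ is exactly $S/L'$ and your hyperplane $\bar H$ plays the role of $\langle B_{L'}\rangle/L'$. The bilinear-pairing framing and the uniform hyperplane-complement count are a tidy repackaging (in particular handling $\bar B=\emptyset$ and $\bar B$ non-spanning at once), but the key inputs---Lemma~\ref{lem2}, the centralizer dimension $\dim N(\bar y)=m-n$, and the $q\ge 3$ arithmetic---are identical.
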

	
	\begin{proof}
		Let dim$(L/L')=m$ for some natural number $m$. As in the proof of Lemma~\ref{lem2}, we have  $n \le m$. Let dim$(L'/Z(L))=k$. Then, by  Lemma~\ref{lem2} we have  $k \leq n-1$. Define
		$$S:=\{x \in L \mid [x,y]=0\; \text{ for some element } y \in L'\setminus Z(L)\}.$$
		Note that $\displaystyle S = \bigcup C_L(h)$, where union is taken over all one dimensional subspaces $\langle h+L'\rangle$ in $L'/Z(L)$. Since $L'$ is abelian, for each $a\in L'\setminus Z(L)$, $L' \leq C_L(a)$ and $\dim\;(C_L(a))= \dim L'+(m-n)$. Thus
		\begin{equation*}
			|S| \leq |L'|q^{m-n}\frac{(q^k-1)}{q-1} < |L'|q^{m-n+k} \leq
			|L'|q^{m-1}<|L|.
		\end{equation*}
		Consequently, $S$ is a proper subspace of $L$, and therefore there exists an element $x \in L \setminus L'$ such that $C_{L'}(x)=Z(L)$. We have
		$$B_{L'}=\{x\in L \mid C_{L'}(x)=Z(L)\} = L \setminus S.$$
		If $\langle B_{L'}\rangle= W<L$, then $|W| \leq |L'|q^{m-1}$. Thus we have
		\begin{align*}
			|L| = |S \cup B_{L'}|\leq |L'|q^{m-1} + |L'|q^{m-1} < |L'|q^m=|L|,
		\end{align*}
		which is a contradiction. Hence  $\langle B_{L'} \rangle = L$, which completes the proof.
	\end{proof}
	
	The following determines the dimension of $L/L'$.
	
	\begin{proposition}\label{dimension_L_over_L'}
		Let $L$ be a 3-step nilpotent stem Lie algebra of breadth type $(0,n)$ for some $n\geq 1$. Then $\dim\;(L/L')=n$.
	\end{proposition}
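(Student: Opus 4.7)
My plan is to prove the inequality $\dim(L/L') \le n$ by contradiction, since the reverse inequality $m := \dim(L/L') \ge n$ is already established in the first paragraph of the proof of Lemma~\ref{lem2}. So I assume $m > n$ and seek a contradiction. The main tools will be Lemma~\ref{lem2} ($k < n$ where $k := \dim L'/Z(L)$), Lemma~\ref{lem3} ($\langle B_{L'}\rangle = L$), and a refinement of the counting argument from Lemma~\ref{lem2}.

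First I invoke Lemma~\ref{lem3} to pick $x \in B_{L'}$. Since $C_{L'}(x) = Z(L)$, the restriction $\mathrm{ad}_x\colon L' \to \gamma_3(L)$ has kernel $Z(L)$, so $\dim [x,L'] = k$; as $\dim[x,L]=n$, the quotient $[x,L]/[x,L']$ has dimension $n-k$. The induced surjection $L/L' \twoheadrightarrow [x,L]/[x,L']$ has kernel $(C_L(x)+L')/L'$ of dimension $m+k-n$, which under our hypothesis $m>n$ exceeds $k$. This already forces the image of $C_L(x)$ in $L/L'$ to be larger than what is accounted for by the bilinear pairing $L/L' \times L'/Z(L) \to \gamma_3(L)$.

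Next I revisit the incidence set
\[
X = \bigl\{\bigl(\langle x+L'\rangle,\,\langle y+Z(L)\rangle\bigr) \,:\, x \in L\setminus L',\ y \in L'\setminus Z(L),\ [x,y]=0\bigr\}
\]
from the proof of Lemma~\ref{lem2}. The value $|X|=(q^{m-n}-1)(q^k-1)/(q-1)^2$ is already computed there. Lemma~\ref{lem3} contributes the new input: no line $\langle x+L'\rangle$ with $x \in B_{L'}$ contributes to $X$ (since $C_{L'}(x)=Z(L)$), so the sum is concentrated on lines coming from $S = L\setminus B_{L'}$, whose size is controlled by $|S| < |L'|q^{m-1}$ (from the proof of Lemma~\ref{lem3}). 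I will split the sum over $S\setminus L'$ according to whether $\dim C_{L'}(x)/Z(L) = k$ (i.e.\ $x\in C_L(L')\setminus L'$) or $\leq k-1$ otherwise, and bound each contribution. The intended output is a chain of inequalities that, when combined with $q\ge 3$, forces $m \leq n$.

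The hardest step will be squeezing the correct inequality out of the counting: the naive bound using only $\dim C_{L'}(x)/Z(L) \le k$ together with Lemma~\ref{lem3}'s estimate on $|S|$ only yields $n\ge 2$, not $m\le n$. I expect the extra leverage comes from the fact that $\mathrm{ad}_x|_{L'}$ is injective modulo the centre for \emph{every} $x\in B_{L'}$, combined with the observation that $B_{L'}$ spans $L$ modulo $L'$: if $m > n$, then the induced pairing $L/L' \otimes L'/Z(L) \to \gamma_3(L)$ would have too many constant-rank $k$ slices to fit inside $\mathrm{Hom}(L'/Z(L),\gamma_3(L))$ with a $k$-dimensional target when restricted as dictated by $\dim[L,y] = n$ for every nonzero $\bar y$. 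Pinning down this rank-incompatibility is the main technical obstacle.
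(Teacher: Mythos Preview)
Your proposal is not a complete proof: you correctly set up the counting on the incidence set $X$ and correctly observe that the naive bound using $|S| < |L'|q^{m-1}$ from Lemma~\ref{lem3} only yields $n \ge 2$, but your final paragraph offers no actual argument. The phrase ``too many constant-rank $k$ slices to fit inside $\mathrm{Hom}(L'/Z(L),\gamma_3(L))$ with a $k$-dimensional target'' is not a precise statement --- you do not know $\dim\gamma_3(L)$ at this stage (in fact $\dim[L,y]=n$ for each nonzero $\bar y$ already forces $\dim\gamma_3(L)\ge n>k$), and there is no general linear-algebra principle of the kind you suggest that forces $m \le n$ from those rank conditions alone. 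You have diagnosed the obstacle but not removed it.

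The missing ingredient, which the paper supplies as its Claim~1, is that $C_L(h)$ is \emph{the same} subalgebra $W := C_L(L')$ for every $h \in L' \setminus Z(L)$; equivalently, $B_{L'} = L \setminus W$ is the complement of a subspace of codimension $n$. This is not a counting fact: it is proved by choosing a generating set adapted to $C_L(x_1)$ for some $x_1 \in B_{L'}$ and using the Jacobi identity to force a certain rank parameter to equal $k$. Once $W$ is in hand, the paper abandons the set $X$ and runs a different double-count, over pairs $(\langle x+Z(L)\rangle,\langle y+Z(L)\rangle)$ with $x\in L\setminus W$, $y\in W\setminus L'$, $[x,y]=0$; computing its size two ways and comparing yields $q^n-1 \le q^k - q^{k-r}$ for some $r\ge 0$, contradicting $k<n$. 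Your approach, which stays with $X$ and never isolates $W$, does not reach this contradiction.
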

	
	\begin{proof}
		When $n=1$, there is nothing to prove. When $n=2$ the result follows easily from Theorem 5.1, \cite{kms}. So we assume that $n>2$. Since $L'$ is abelian $\dim\;(L/L')\geq n$. If  dim$(L/L')= n$ we are done. Otherwise, let $\dim\;(L/L')\geq n+m$ with $m>0$.  We now proceed to get a contradiction. Let dim$(L'/Z(L)) =k$, where $k \leq n-1$ (by Lemma \ref{lem2}). We complete the proof in several steps.
		
		\medskip
		
		\noindent \textbf{\underline{Claim $1$}:} {\bf For all $h \in L' \setminus Z(L)$, $C_L(h)=C_L(L')$.}

		\noindent It follows from Lemma \ref{lem3} that we can choose an element $x_1 \in L$ such that
		$$C_{L'}(x_1)=C_{L}(x_1) \cap L' =Z(L).$$
		Let $\{x_1, x_2, \ldots,x_{m+k},\ldots, x_{m+n}\}$ be a minimal generating set for $L$ such that $C_L(x_1)=\langle x_1, x_2, \dots, x_{m+k},\; Z(L)\rangle$.
		
		If $\{[x_1,x_i] \mid m+k< i\leq m+n \}\subseteq Z(L)$, then $[x_1,x_i] \in Z(L)$ for all $i$ with $1 \le i \le m+n$. By an application of Jacobi identity, we get $[x_r,x_s] \in C_L(x_1) \cap L'=Z(L)$ for all $r,s$ with $1\le r,s\le m+n$, that is, $L' \subseteq Z(L)$, a contradiction. Thus, without loss of generality, we can assume that $[x_1,x_{m+n}] \notin Z(L)$. Consider the subspace
		$$U=\langle [x_1,x_{m+n}], [x_2,x_{m+n}],\ldots, [x_{m+k}, x_{m+n}],Z(L)\rangle.$$
		Since $\dim\;(L'/Z(L) = k$, among the Lie brackets $[x_i,x_{m+n}]$ ( $1\leq i\leq m+k$), at most $k$ are independent  modulo $Z(L)$. We can assume without loss of generality that for some integer $l$ with $1 \le l \le k$, $[x_1,x_{m+n}], [x_2,x_{m+n}],\ldots, [x_{l}, x_{m+n}]$ are independent modulo $Z(L)$, and generate $U$ along  with $Z(L)$.
		
		\medskip
		
		We now show that $l=k$. For any $t$ with $l<t\leq m+k$, we have 
		$$[x_t,x_{m+n}]=i_1[x_1,x_{m+n}] + i_2[x_2,x_{m+n}] + \cdots + i_l[x_l,x_{m+n}]+Z(L).$$
		Let $x_t'=x_t+ (-i_1)x_1 + \cdots + (-i_l)x_l$. Then $[x_t',x_{m+n}] \in Z(L)$. Thus with $C_L(x_1)=\langle x_1$, $x_2$, $\ldots$, $x_{m+k}$, $Z(L)\rangle$, we can assume (modifying $x_t$ by $x_t'$ if necessary) that,
		\begin{equation}\label{eqse3-a}
			[x_t,x_{m+n}] \in Z(L)\;\; \text{for}\;\; l<t\leq m+k.
		\end{equation}
		Since $[x_1,x_i]=[x_1,x_j]=0$ for any $i,j\leq m+k$, using Jacobi identity we get
		$$[x_j,x_i]\in C_L(x_1)\cap L'=Z(L).$$ 
		In particular, for $l<t\leq m+k$, $[x_t,x_i]\in Z(L)$. Consequently by \eqref{eqse3-a} we get
		$$[x_t,[x_i,x_{m+n}]]=0  \hskip5mm (l<t\leq m+k, \,\,1 \le i\leq l).$$
		Thus for any $i$ with $1 \le i \leq l$, $\langle x_{l+1},\ldots, x_{m+k}, L'\rangle \le C_L([x_i,x_{m+n}])$. So
		$$n=\dim\;(L/C_L([x_i,x_{m+n}])) \leq m+n-(m+k-l) =n-k+l\leq n.$$
		Consequently, we get
		\begin{enumerate}
			\item $k=l$,
			\item $U=\langle [x_1,x_{m+n}], [x_2,x_{m+n}],\ldots, [x_k,x_{m+n}], Z(L)\rangle=L'$, and 
			\item $C_L([x_i,x_{m+n}]) =\langle  x_{k+1},x_{k+2}\ldots,x_{k+m}, L'\rangle:= W$ (say), for all $1 \le i\leq k$.
		\end{enumerate}
		\vspace*{0.1cm}
		By (2) and (3), we get  $C_L(h)=W = C_L(L')$ for all $h\in L'\setminus Z(L)$. \textbf{This proves  Claim 1}.
		
		\medskip
		
		Observe that by Lemma \ref{lem3},
		$$L \setminus W=\{x \in L \mid C_{L'}(x)=Z(L)\}=B_{L'},$$ which implies  that $C_W(x) \cap L' = Z(L)$ for all $x \in L \setminus W$.
		\vskip3mm
		
		\noindent{\bf \underline{Claim $2$:}} {\bf For any $y \in L \setminus W$, there exist elements $h_1,h_2,\ldots,h_m \in L'$ such that
			$$\frac{\langle x_{k+i}+h_i: 1\leq i \leq m \rangle + Z(L)}{Z(L)} = \frac{C_L(y)\cap W}{Z(L)}$$ and is of dimension $m$.}
		
		\vskip2mm
		Let $y \in L \setminus W$ be any arbitrary element. Then $C_{L'}(y)=Z(L)$. Consider a minimal generating set $Y:= \{y=y_1, y_2, \ldots, y_{m+k}, \ldots, y_{m+n}\}$ of $L$ such that $C_L(y_1)=\langle y_1,y_2,\dots,y_{m+k}, Z(L)\rangle.$ As in Claim 1, we can modify (if necessary) the elements $y_{k+1}, \dots,$ $y_{m+k}$ so that 
		$$\langle y_{k+1},y_{k+2}, \dots, y_{m+k}\rangle +L'= W =\langle x_{k+1},x_{k+2},\dots,x_{m+k}\rangle+ L'.$$
		
		Consequently, for $ 1 \le i \le m$, it follows that $x_{k+i} = (\sum_j a_{ij}y_{k+j}) - h_i$ for some set of scalars $a_{ij}\in \F_q$ and some element $h_i \in L'$. Hence $x_{k+i} + h_i \in C_L(y)$, which shows that
		\begin{equation}\label{eqse3-0a}
			\frac{\langle x_{k+i}+h_i: 1\leq i \leq m \rangle + Z(L)}{Z(L)} \leq  \frac{C_L(y)\cap W}{Z(L)}.
		\end{equation}
		Note that for $1 \le i, j \le m$, $[x_{k+i} + h_i,x_{k+j}+h_j] \in Z(L)$. Thus
		\begin{equation}\label{eqse3-0b}
			\dim\;\left[\frac{\langle x_{k+i}+h_i: 1\leq i \leq m \rangle + Z(L)}{Z(L)} \right]= m.
		\end{equation}
		By the choice of $y_{k+1}, \dots,y_{m+k}$, it follows that
		$$\langle y_{k+1}, \dots, y_{k+m}\rangle + Z(L)=  C_L(y)\cap W,$$  and hence  $\dim\;\left[\frac{C_L(y)\cap W}{Z(L)}\right]= m$. \textbf{This along with  \eqref{eqse3-0a} and \eqref{eqse3-0b} proves Claim 2}.
		
		\medskip
		
		\noindent\textbf{\underline{Claim 3}.} \textbf{The cardinality of $$S:= \left\{(\langle x+Z(L)\rangle, \langle y+Z(L) \rangle) \mid x\in L\setminus W,\,\, y\in W\setminus L',[x,y]= 0\right\}$$ is $\frac{q^{m+k}(q^n-1)(q^m-1)}{(q-1)^2}$}.
		
		\medskip
		
		It is easy to see that $L/Z(L)$, $W/Z(L)$ and $L'/Z(L)$ have $\frac{q^{n+m+k}-1}{q-1}$, $\frac{q^{m+k}-1}{q-1}$, and $\frac{q^k-1}{q-1}$ number of subspaces of dimension 1 respectively. Consequently
		\begin{equation}\label{eqse3-1}
			\mid \{ \langle x+Z(L)\rangle \mid x \in L\setminus W \} \mid = \frac{q^{m+k}(q^n-1)}{q-1}
		\end{equation}
		and
		\begin{equation}\label{eqse3-2}
			\mid \{ \langle y+Z(L)\rangle \mid y \in W\setminus L'\} \mid = \frac{q^{k}(q^m-1)}{q-1}.
		\end{equation}
		
		For each $x \in L \setminus W$, by Claim 2, we have $\dim\;\frac{C_L(x) \cap W}{Z(L)} = m$; hence
		\begin{equation}\label{eqse3-3}
			\mid \{ \langle y + Z(L) \rangle \mid y \in W \setminus L', [x,y] = 0\} \mid = \frac{q^m-1}{q-1}.
		\end{equation}
		
		\noindent Hence by \eqref{eqse3-1} and \eqref{eqse3-3}, we have
		\begin{equation}\label{eqse3-4}
			|S|=\frac{q^{m+k}(q^n-1)(q^m-1)}{(q-1)^2},
		\end{equation}
		and the \textbf{proof of Claim 3 is complete}.
		
		\vskip3mm
		
		We now proceed to get the final contradiction. Note that there exists some element $y_0\in W \setminus L'$ such that
		\begin{equation}\label{eqse3-5}
			\mid \{ \langle x+Z(L)\rangle \mid x \in L\setminus W, [x,y_0]=0\} \mid \ge \frac{q^{m}(q^n-1)}{q-1}.
		\end{equation}
		For if there is no such $y_0$ then for each $y \in W \setminus L'$, we get
		$$\mid \{ \langle x+Z(L)\rangle \mid x \in L\setminus W, [x,y]=0\} \mid < \frac{q^{m}(q^n-1)}{q-1}.$$
		But then
		$$|S| < \frac{q^{m+k}(q^n-1)(q^m-1)}{(q-1)^2},$$ which is a contradiction to Claim 3.
		
		\medskip
		
		Note that $L'\le C_L(y_0)$, $\dim\;(C_L(y_0)/L')=m$, $\dim\;(L/W)=n$ and $\dim\;(W/L')=m$.  Assume that
		$$\dim\;\left[\frac{C_L(y_0) + W}{W}\right]=n-s \;\;\text{and} \;\; \dim\;(C_W(y_0)/L')=m-r,$$
		for some non-negative integers $s$ and $r$. Since $\dim\;(L/C_L(y_0))=n$, it follows that $n=s+r$. Then
		$$\dim\;(C_L(y_0)/Z(L))=k+(m-r)+(n-s)=k+m$$
		and
		$$\dim\;(C_W(y_0)/Z(L))=k+(m-r).$$
		Thus,
		\begin{eqnarray*}
			\mid \langle {x+Z(L)} \mid x \in L \setminus W, [x,y_0] = 0\rangle \mid &=& \frac{(q^{k+m}-1)-(q^{k+m-r}-1)}{q-1}\\
			&=& \frac{q^{m+k-r}(q^{n-s}-1)}{q-1}.
		\end{eqnarray*}
		
		The preceding equation along with \eqref{eqse3-5} gives $$\frac{q^m(q^n-1)}{q-1}  \le \frac{q^{m+k-r}(q^{n-s}-1)}{q-1},$$
		which implies that $q^n-1 \leq  q^k - q^{k-r}.$ Hence by Lemma \ref{lem2},
		$$q^n < q^k +1 \le q^{n-1} + 1,$$
		which is absurd as $q\geq 3$ and the proof of the proposition is now complete.
	\end{proof}
	
	As an immediate consequence of the preceding proposition, we get the following important information about centralizers of elements in $L$.
	\begin{corollary}\label{centralizer_info_L}
		Let $L$ be a 3-step nilpotent stem Lie algebra over $\F_q$ of breadth type $(0,n)$ for some $n\geq 1$. Then for all $x \in L \setminus L'$ the following hold:
		\begin{enumerate}
			\item $C_L(x) \cap L'=Z(L).$
			\item $\dim\;(C_L(x)/Z(L)) = \dim\;(L'/Z(L))$.
		\end{enumerate}
	\end{corollary}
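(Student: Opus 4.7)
The plan is to combine Proposition~\ref{dimension_L_over_L'} with the fact that $L'$ is abelian. Since $L$ is $3$-step nilpotent, $[L',L']\subseteq \gamma_4(L)=0$, so $L'$ is abelian. Thus for every $h\in L'$, we have $L'\subseteq C_L(h)$.

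The key preliminary observation I would make is that for any $h\in L'\setminus Z(L)$, $C_L(h)=L'$. Indeed, $h$ is non-central so has breadth $n$, giving $\dim(L/C_L(h))=n$. By Proposition~\ref{dimension_L_over_L'}, $\dim(L/L')=n$, and since $L'\subseteq C_L(h)$, the dimension equality forces $C_L(h)=L'$.

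For part (1), let $x\in L\setminus L'$. The inclusion $Z(L)\subseteq C_L(x)\cap L'$ is trivial. For the reverse, suppose there were $y\in (C_L(x)\cap L')\setminus Z(L)$. Since $y\in L'\setminus Z(L)$, the preceding observation yields $C_L(y)=L'$; but $[x,y]=0$ says $x\in C_L(y)=L'$, contradicting $x\in L\setminus L'$. Hence $C_L(x)\cap L'=Z(L)$.

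For part (2), I would finish by a dimension count. Since $x\notin L'$ and $L$ has only breadths $0$ and $n$, $\dim(L/C_L(x))=n$, so $\dim C_L(x)=\dim L-n$. Using
\[
\dim L=\dim(L/L')+\dim(L'/Z(L))+\dim Z(L)=n+\dim(L'/Z(L))+\dim Z(L),
\]
and $Z(L)\subseteq C_L(x)$, we obtain $\dim(C_L(x)/Z(L))=\dim(L'/Z(L))$. There is no real obstacle here; the whole statement is a clean bookkeeping consequence once the equality $\dim(L/L')=n$ is in hand, the only subtlety being the use of the abelianness of $L'$ to pin down $C_L(h)=L'$ for non-central $h\in L'$.
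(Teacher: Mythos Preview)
Your proof is correct and follows essentially the same approach as the paper: both use that $L'$ is abelian to get $C_L(h)=L'$ for $h\in L'\setminus Z(L)$, deduce part (1) from this, and then obtain part (2) from the fact that every non-central element has centralizer of codimension $n$ (the paper phrases this as $\dim(C_L(x)/Z(L))=\dim(C_L(y)/Z(L))$ for $y\in L'\setminus Z(L)$, while you do the equivalent arithmetic directly).
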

	
	\begin{proof}
		For every  $x\in L \setminus Z(L)$ we have $\dim\;(L/C_L(x))=n$. From Proposition \ref{dimension_L_over_L'}, we have $\dim\;(L/L')=n$. Since $L$ is $3$-step nilpotent, the derived subalgebra $L'$ is abelian. Thus we have $C_L(y) = L'$ for all $y\in L'\setminus Z(L)$, and consequently $C_L(x) \cap L'=Z(L)$ for all $x \in L \setminus L'$.
		
		For the second part, observe that given any two elements $x, y \in L\setminus Z(L)$, we have $\dim\;(C_L(x)/Z(L)) = \dim\;(C_L(y)/Z(L))$. Consider $x \in L \setminus L'$ and $y \in L' \setminus Z(L)$. Then we have $\dim\;(C_L(x)/Z(L)) = \dim\;(C_L(y)/Z(L)) = \dim\;(L'/Z(L))$. This completes the proof.
	\end{proof}

	\noindent Now we are ready to prove Theorem~\ref{odd_dimension_three_step_nilpotent}.
	\begin{proof}[\textbf{Proof of Theorem~\ref{odd_dimension_three_step_nilpotent}}]
		It is enough to prove the result for a stem Lie algebra $L$. Suppose $\dim\;(L'/Z(L))=m$. We claim that $n=2m$. Consider $x,y\in L$ with $[x,y]\notin Z(L)$. We consider two cases, namely (1) $n > 2m$ and (2) $n <2m$, and get contradiction in both.
		
		\medskip
		
		\noindent\textbf{\underline{Case I}:} Let $n > 2m$. Write $\overline{L}=L/Z(L)$. Since $\overline{L}$ is of nilpotency class $2$ and dim$(\overline{L}')=m$, we have
		$$\dim\;(\overline{L}/C_{\overline{L}}(\overline{x}))\leq m\;\; \text{and}\;\; \dim\;(\overline{L}/C_{\overline{L}}(\overline{y}))\leq m.$$
		Since $\dim\;(\overline{L}/\overline{L}')=n>2m$, there exists $\overline{w} \notin \overline{L}'$ such that $\overline{w}\in C_{\overline{L}}(\overline{x}) \cap C_{\overline{L}}(\overline{y})$, i.e. $[x,w],[y,w]\in Z(L)$. Since  $w \notin L'$, using Jacobi identity we get $[x,y]\in C_L(w)\cap L'=Z(L)$, a contradiction to our assumption that  $[x,y] \notin Z(L)$.
		
		\medskip
		
		\noindent\textbf{\underline{Case II}:} Let  $n < 2m$. Since $x \not\in L'$, we have
		$$\dim \;\frac{C_L(x)+ L'}{L'} = \dim\;\frac{C_L(x)}{C_L(x)\cap L'}= \dim\;(C_L(x)/Z(L)) = m.$$
		Similarly $\dim\;((C_L(y)+L')/L')=m$.
		Since $n<2m$ and $\dim\;(L/L')=n<2m$, we get $(C_L(x)+ L')\cap (C_L(y)+ L')$ contains $L'$ properly. Consider $w\in (C_L(x)+ L')\cap (C_L(y)+ L')$ with $w\notin L'$. Since $L$ is of nilpotency class $3$, it is easy to see that $[w,x], [w,y]\in Z(L)$. Once again by Jacobi identity $[x,y]\in C_L(w)\cap L'=Z(L)$, a contradiction.
		
		Since we have contradiction in both cases, we conclude that $n=2m$ and the proof is complete.
	\end{proof}

	From the proof of the Theorem~\ref{odd_dimension_three_step_nilpotent}, we get the following:
		
	\begin{corollary}\label{dimension_derived_over_center}
		Let $L$ be a 3-step nilpotent stem Lie algebra of breadth type $(0,2m)$ where $m\geq 1$. Then $\dim\;(L'/Z(L))=m$.
	\end{corollary}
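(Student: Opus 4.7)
The plan is to observe that the proof of Theorem~\ref{odd_dimension_three_step_nilpotent} already contains the corollary, essentially for free. Concretely, in that proof one \emph{sets} $m' := \dim(L'/Z(L))$ at the outset and then shows, by ruling out both $n > 2m'$ (Case I, via a dimension count in $\overline{L} = L/Z(L)$ combined with the Jacobi identity) and $n < 2m'$ (Case II, via the fact that $\dim((C_L(x)+L')/L') = m'$ from Corollary~\ref{centralizer_info_L}, which forces $(C_L(x)+L')\cap(C_L(y)+L')$ to contain $L'$ properly), that one must have $n = 2m'$.

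So my proof of Corollary~\ref{dimension_derived_over_center} would simply be the following one-line remark: reduce to the stem case (since isoclinism preserves breadth type and both sides of the equality), apply the argument used in the proof of Theorem~\ref{odd_dimension_three_step_nilpotent} to our Lie algebra $L$ of breadth type $(0, 2m)$, and let $m' := \dim(L'/Z(L))$. The same two-case analysis gives $2m = n = 2m'$, hence $m' = m$, which is the desired conclusion.

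The only thing to be careful about is the logical direction. The corollary is stated for fixed $m$ (the breadth type parameter), whereas in the proof of the theorem the symbol $m$ denoted $\dim(L'/Z(L))$; so I would introduce a distinct letter (say $m'$) for the latter to avoid a clash with the $m$ appearing in the hypothesis $(0,2m)$, and then note that the theorem's argument establishes $n = 2m'$ without assuming anything about the parity of $n$, so in particular when $n = 2m$ we obtain $m = m'$. There is no genuine obstacle here — this is a bookkeeping corollary — and no further lemma is required beyond what was already proved.
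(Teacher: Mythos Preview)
Your proposal is correct and is essentially identical to the paper's own treatment: the paper simply remarks that the corollary follows from the proof of Theorem~\ref{odd_dimension_three_step_nilpotent}, where one sets $m':=\dim(L'/Z(L))$ and the two-case analysis forces $n=2m'$. The only superfluous step in your write-up is the reduction to the stem case, since the corollary already assumes $L$ is stem.
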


	
	\section{A 3-step nilpotent Lie algebra of breadth type $(0,n)$}\label{section 3}
	
	In this section we show that for every $m\geq 1$, there exists a finite dimensional 3-step nilpotent Lie algebra of breadth type $(0,2m)$ over a finite field $\F_q$ of characteristic not equal to 2. The construction runs parallel to its $p$-group analogue (see \cite{nky}). 
	
	 We define $\mathfrak{g}_m=\F_{q^m }^5$ which is a vector space of dimension $5m$ over $\F_q$. Now define a bracket $[,]:\mathfrak{g}_m\times \mathfrak{g}_m \to \mathfrak{g}_m$ as follows: for any two elements $\alpha=(a,b,c,d,e)$ and $\beta=(x,y,z,u,v)$ of $\mathfrak{g}_m$ the bracket $[\alpha,\beta]$ is defined by
	 $$[(a,b,c,d,e),(x,y,z,u,v)]=(0,0,ay-bx,ay-bx+2cx-2az,ay-bx+2bz-2cy).$$
	
	\begin{theorem}\label{example_3_step_nilpotent_over_finite_fields}
	$\mathfrak{g}_m$ is a 3-step nilpotent stem Lie algebra of dimension $5m$ over the finite field $\F_q$ where $q$ is odd. Furthermore, $\mathfrak g_m$ is of breadth type $(0,2m)$ where $m\geq 1$.
	\end{theorem}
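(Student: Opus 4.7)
Write $L = \mathfrak{g}_m$. The plan is to verify, in order, that (i) $[\,,\,]$ is a Lie bracket, (ii) $L$ has nilpotency class exactly $3$, (iii) $Z(L) \subseteq L'$, and (iv) every non-central element has breadth $2m$. The single structural observation driving everything is: a bracket $[\alpha, \beta]$ always has its first two coordinates zero, and for $\eta = (0, 0, p, q, r)$ and $\gamma = (X, Y, Z, U, V)$ the definition collapses to $[\eta, \gamma] = (0, 0, 0, 2pX, -2pY)$, depending only on $p$ and on $(X, Y)$.

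$\F_q$-bilinearity is immediate, and antisymmetry is a one-line check in each of the three nontrivial coordinates. For the Jacobi identity, applying the specialization above with the third coordinates $ay-bx$, $xY-yX$, $Xb-Ya$ of $[\alpha,\beta], [\beta,\gamma], [\gamma,\alpha]$ respectively, the sum $[[\alpha,\beta],\gamma] + [[\beta,\gamma],\alpha] + [[\gamma,\alpha],\beta]$ collapses in the fourth coordinate to twice the scalar identity
\[
(ay-bx)X + (xY-yX)a + (Xb-Ya)x = 0,
\]
with an analogous identity in the fifth coordinate; both follow by direct cancellation of six terms.

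For (ii), the observation gives $L' \subseteq \{(0,0,\ast,\ast,\ast)\}$ and $\gamma_3(L) \subseteq \{(0,0,0,\ast,\ast)\}$. I would check equality by exhibiting explicit brackets: for instance $[(1,0,0,0,0),(0,1,0,0,0)] = (0,0,1,1,0)$, $[(1,0,0,0,0),(0,0,1,0,0)] = (0,0,0,-2,0)$, and $[(0,1,0,0,0),(0,0,1,0,0)] = (0,0,0,0,2)$ together span $\{(0,0,\ast,\ast,\ast)\}$ as an $\F_{q^m}$-module (hence as an $\F_q$-vector space), where the hypothesis $\mathrm{char}(\F_q) \neq 2$ enters crucially; then the explicit formula for $[\eta,\gamma]$ shows $\gamma_3(L) = \{(0,0,0,\ast,\ast)\}$, and a third bracket vanishes identically. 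Thus the class is exactly $3$. For (iii), solving $[\alpha, \beta] = 0$ for all $\beta$ with $\alpha = (a,b,c,d,e)$ forces successively $a = b = 0$ from $ay-bx$, then $c = 0$ from $2cx - 2az$, so $Z(L) = \{(0,0,0,\ast,\ast)\} = \gamma_3(L) \subseteq L'$ and $L$ is stem, with dimensions $5m, 3m, 2m$ for $L, L', Z(L)$ respectively.

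For (iv), the centralizer equations reduce, after using $ay = bx$ to simplify the other two, to $ay=bx$, $cx=az$, $bz=cy$, with $u, v$ always free (contributing $2m$ to $\dim C_L(\alpha)$). A three-case split according to the first nonzero entry of $(a,b,c)$ shows in each case that exactly one of $x, y, z$ stays free over $\F_{q^m}$ while the other two are determined as scalar multiples of it, giving $\dim_{\F_q} C_L(\alpha) = 3m$ for every $\alpha \notin Z(L)$. Hence the breadth is $5m - 3m = 2m$ throughout $L \setminus Z(L)$, and the breadth type is $(0, 2m)$. I expect the main care-points to be organising the case analysis in (iv) and tracking where $\mathrm{char}(\F_q) \neq 2$ is used in (ii); without the characteristic hypothesis the bracket degenerates and the algebra would collapse to nilpotency class only $2$.
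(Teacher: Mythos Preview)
Your proof is correct and complete, with one harmless arithmetic slip: $[(1,0,0,0,0),(0,1,0,0,0)] = (0,0,1,1,1)$, not $(0,0,1,1,0)$ (the fifth coordinate is $ay-bx+2bz-2cy = 1$). This does not affect the spanning argument.

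Your approach differs from the paper's in how the Lie algebra axioms are established. The paper avoids verifying the Jacobi identity altogether by realising $\mathfrak{g}_m$ as the quotient $\mathfrak{L}_m/Z(\mathfrak{L}_m)$ of an explicit Lie subalgebra $\mathfrak{L}_m \subseteq \mathrm{U}_5(\F_{q^m})$; since subalgebras and quotients of Lie algebras are Lie algebras, bilinearity, antisymmetry, and Jacobi come for free. You instead verify Jacobi directly, exploiting the key observation that $[\eta,\gamma]=(0,0,0,2pX,-2pY)$ for $\eta$ with vanishing first two coordinates, which reduces the triple sum to a transparent six-term cancellation. The paper's route is slicker and explains where the bracket formula comes from; yours is more self-contained and makes no appeal to an auxiliary object. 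For the structural parts (ii)--(iv) the two arguments are essentially the same, though your three-case split in (iv) on the first nonzero entry of $(a,b,c)$ is marginally cleaner than the paper's, which treats $\alpha \in L' \setminus Z(L)$ and $\alpha \in L \setminus L'$ separately.
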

	
	To prove that $\mathfrak{g}_m$ is a 3-step nilpotent Lie algebra we take an indirect approach. Let $\mathrm{U}_5(\F_{q^m})$ be the Lie algebra (over $\F_q$) of $5\times 5$ strictly upper triangular matrices with entries in the field $\F_{q^m }$, the unique degree-$m$ extension of $\F_q$. We now consider the following subset of $\mathrm{U}_5(\F_{q^m})$:
	\[
	\mathfrak{L}_m=\left\{\begin{bmatrix}
		0 & a & c & d & f \\
		0 & 0 & b & a+b-c & e \\
		0 & 0 & 0 & a & c \\
		0 & 0 & 0 & 0 & b\\
		0 & 0 & 0& b & 0 
	\end{bmatrix}
	\mid a,b,c,d,e,f \in \F_{q^m}
	\right\}
	\]
	For simplicity of notation, we will write $(a,b,c,d,e,f)$ to mean the matrix above. Let $M$ and $N$ be two matrices in $\mathfrak{L}_m$ represented by $(a,b,c,d,e,f)$ and $(x,y,z,u,v,w)$ respectively. Then
	\[[M,N]=MN-NM=\begin{bmatrix} 
		0 & 0 & ay-bx & ay-bx+2cx-2az & av+dy-bu-ex  \\
		0 & 0 & 0 & bx-ay & ay-bx+2bz-2cy\\
		0 & 0 & 0 & 0 &  ay-bx\\
		0 & 0 & 0 & 0 & 0\\
		0 & 0 & 0 & 0 & 0
	\end{bmatrix}.
	\]
	This clearly shows that $\mathfrak{L}_m$ is a Lie subalgebra of $\mathrm U_5(\F_{q^m})$. In 6-tuple notation we have
	\begin{eqnarray*}
		&& \hspace{ 3 cm} [(a,b,c,d,e,f), (x,y,z,u,v,w)]= \\ && (0,0,ay-bx,ay-bx+2cx-2az, ay-bx+2bz-2cy,av+dy-bu-ex).
	\end{eqnarray*}
	It is immediately clear that the center $Z(\mathfrak{L}_m)$ of $\mathfrak{L}_m$ is given by
	$$Z(\mathfrak{L}_m)=\{(0,0,0,0,0,f)\mid f\in \F_{q^m} \}.$$
	We now consider the Lie algebra $\mathfrak{L}_m/Z(\mathfrak{L}_m)$. Observe that the dimension of the Lie  algebra $\mathfrak{L}_m/Z(\mathfrak{L}_m)$ over $\F_q$ is $5m$. An element of $\mathfrak{L}_m/Z(\mathfrak{L}_m)$ is of the form $(a,b,c,d,e,0)+Z(\mathfrak{L}_m)$. Note that this element of $\mathfrak{L}_m/Z(\mathfrak{L}_m)$ can be identified with the element $(a,b,c,d,e)$ of $\mathfrak{g}_m$. With  this identification in mind it is now easy to see that the bracket of $\mathfrak{g}_m$ is nothing but the Lie bracket of  $\mathfrak{L}_m/Z(\mathfrak{L}_m)$. This shows that $\mathfrak{g}_m$ is a Lie algebra over $\F_q$ and $\mathfrak{g}_m\cong \mathfrak{L}_m/Z(\mathfrak{L}_m)$ as $\F_q$-Lie algebras. Since $\mathfrak{L}_m/Z(\mathfrak{L}_m)$ is a nilpotent stem Lie algebra, so is $\mathfrak{g}_m$.
	
	\begin{proposition}\label{Liealgebra_g_m}
		The following properties of $\mathfrak{g}_m$ hold.
		\begin{enumerate}
			\item $\mathfrak{g}_m'=\{(0,0,c,d,e)\mid c,d,e\in \F_q\}$.
			\item $\gamma_{3}(\mathfrak{g}_m)=\{(0,0,0,d,e)\mid d,e\in \F_q\}=Z(\mathfrak{g}_m)$. Consequently, $\mathfrak{g}_m$ is a 3-step nilpotent Lie algebra over $\F_q$.
			\item If $\alpha=(0,0,c,d,e)\in \mathfrak{g}_m'\setminus Z(\mathfrak{g}_m)$, then the centralizer $C_{\mathfrak{g}_m}(\alpha)=\mathfrak{g}_m'$. Thus the breadth of $\alpha$ is $2m$.
			\item If $\alpha\in \mathfrak{g}_m\setminus \mathfrak{g}_m'$, then breadth of $\alpha$ is $2m$.
		\end{enumerate}
	\end{proposition}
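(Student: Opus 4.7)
The plan is to establish all four assertions by direct computation from the explicit bracket formula, working in $\mathfrak{g}_m=\F_{q^m}^{5}$ as an $\F_q$-vector space of dimension $5m$ (I read the statement of (1) with $c,d,e\in\F_{q^m}$, since the coordinates lie in $\F_{q^m}$).

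For part (1), the bracket formula immediately gives $[\mathfrak{g}_m,\mathfrak{g}_m]\subseteq\{(0,0,c,d,e):c,d,e\in\F_{q^m}\}$ since the first two slots of every bracket vanish. For the reverse inclusion I would exhibit explicit generators: $[(1,0,0,0,0),(0,1,0,0,0)]=(0,0,1,1,1)$, $[(a,0,0,0,0),(0,0,z,0,0)]=(0,0,0,-2az,0)$, and $[(0,b,0,0,0),(0,0,z,0,0)]=(0,0,0,0,2bz)$. Since $q$ is odd, $2$ is a unit in $\F_{q^m}$, so these brackets span the asserted subspace.

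For part (2), computing $[(a,b,c',d',e'),(0,0,c,d,e)]=(0,0,0,-2ac,2bc)$ shows $\gamma_3(\mathfrak{g}_m)=[\mathfrak{g}_m,\mathfrak{g}_m']\subseteq\{(0,0,0,d,e)\}$, with equality by choosing $a,b,c$ appropriately. To match this with the center, I would directly verify that every $(0,0,0,d,e)$ is central (both the third, fourth, and fifth output coordinates collapse to $0$), and conversely that any element outside this subspace has some noncommuting partner: if $(a,b)\neq(0,0)$ take $(x,y,0,0,0)$ with $ay-bx\neq 0$; if $a=b=0$ but $c\neq 0$, take $(1,0,0,0,0)$, producing $(0,0,0,2c,0)\neq 0$. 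Hence $Z(\mathfrak{g}_m)=\gamma_3(\mathfrak{g}_m)$ is nonzero and $\gamma_4(\mathfrak{g}_m)=0$, so $\mathfrak{g}_m$ has nilpotency class exactly $3$.

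For part (3), with $\alpha=(0,0,c,d,e)$ and $c\neq 0$, the formula reduces to $[\alpha,(x,y,z,u,v)]=(0,0,0,2cx,-2cy)$, which vanishes precisely when $x=y=0$; hence $C_{\mathfrak{g}_m}(\alpha)=\{(0,0,z,u,v)\}=\mathfrak{g}_m'$, of $\F_q$-codimension $2m$.

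For part (4), let $\alpha=(a,b,c,d,e)$ with $(a,b)\neq(0,0)$. Setting each of the three nonzero bracket coordinates to $0$ and simplifying yields, on the block $(x,y,z)$, the $\F_{q^m}$-linear system
\[
ay-bx=0,\qquad cx-az=0,\qquad bz-cy=0,
\]
with $u,v$ free. The key technical step is to show its $3\times 3$ coefficient matrix has rank exactly $2$: the determinant is visibly zero, and a short case split (take the first two rows if $a\neq 0$, or the first and third if $a=0$ and $b\neq 0$) exhibits two independent rows. Hence the solution space in $(x,y,z)$ is $1$-dimensional over $\F_{q^m}$, so $\dim_{\F_q}C_{\mathfrak{g}_m}(\alpha)=(1+2)m=3m$, giving breadth $2m$. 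The only genuinely delicate point in the whole argument is this rank-$2$ claim; everything else is routine coordinate pushing, provided one is careful to keep the $\F_q$- and $\F_{q^m}$-dimensions cleanly separated.
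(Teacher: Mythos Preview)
Your proof is correct and follows essentially the same approach as the paper, which in fact dismisses parts (1)--(2) as ``easy to see'' and handles (3)--(4) by the same case split on $a\neq 0$ versus $b\neq 0$ that you carry out (your rank-$2$ argument for the $3\times 3$ system is just a slightly more systematic phrasing of what the paper does in one line).

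One small slip in part (1): the single bracket $(0,0,1,1,1)$ together with your two families $(0,0,0,-2az,0)$ and $(0,0,0,0,2bz)$ only generates, over $\F_q$, elements with third coordinate in $\F_q$, not all of $\F_{q^m}$. You want instead the family $[(a,0,0,0,0),(0,1,0,0,0)]=(0,0,a,a,a)$ for $a\in\F_{q^m}$, which together with the other two gives the full claimed subspace.
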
 
	
	\begin{proof}
		The first two assertions are easy to see.  Suppose $\alpha=(0,0,c,d,e)\in \mathfrak{g}_m'\setminus Z(\mathfrak{g}_m)$. Then $c\neq 0$. If $\beta=(x,y,z,u,v)\in C_{\mathfrak{g}_m}(\alpha)$ we must have $[\alpha,\beta]=0$ which yields $2cx=0$ and $2cy=0$. Since $q$ is odd, we conclude that $x=y=0$. Thus $C_{\mathfrak{g}_m}(\alpha)\subseteq \mathfrak{g}_m'$. Since $\mathfrak{g}'_m$ is abelian we have $C_{\mathfrak{g}_m}(\alpha)=\mathfrak{g}'_m$. 
			
		\medskip
			
		Let $\alpha=(a,b,c,d,e)\in \mathfrak{g}_m\setminus \mathfrak{g}'_m$. We have $(a,b)\neq (0,0)$. If $\beta=(x,y,z,u,v) \in C_{\mathfrak{g}_m}(\alpha)$  then it is easy to see that $bx=ay$, $az=cx$, and $cy=bz$. Suppose $a\neq 0$. Then $y,z$ are uniquely determined. If $b\neq 0$, then $x$ and $z$ are uniquely determined. Thus the dimension of $C_{\mathfrak{g}_m}(\alpha)$ is $3m$, whence breadth of $\alpha$ is $2m$. This completes the proof.
	\end{proof}

	\begin{proof}[\textbf{Proof of Theorem~\ref{example_3_step_nilpotent_over_finite_fields}}]
		The proof follows from the discussions above and Proposition~\ref{Liealgebra_g_m}.
	\end{proof}
	
	\section{2-step nilpotent Camina Lie algebras arising from finite semifields}\label{section 4}
	
	In the remaining part of this article our aim is to classify the 3-step nilpotent Lie algebras of breadth type $(0,2m)$. In this section we prove a characterization of certain 2-step nilpotent Camina Lie algebras (see Section 4, \cite{kns} for definition) over a finite field $\F_q$ where $q\geq 3$. This characterization will play a central role in determining the structure of the central quotient of a 3-step nilpotent Lie algebra of breadth type $(0,2m)$.
	\begin{theorem}\label{Characterization_Camina_Lie_algebra}
		Let $L$ be a finite dimensional Camina Lie algebra of dimension $3n$, and of nilpotency class $2$ over a finite field $\F_q$ where $q\geq 3$. Let $\dim\;(L/L')=2n$. Then the following statements are equivalent.
		\begin{enumerate}
			\item $L$ is isomorphic to $\rm{U}_3(q^n)$.
			\item All the centralizers of non-central elements of $L$ are abelian.
		\end{enumerate}
	\end{theorem}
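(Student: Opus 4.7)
The implication $(1)\Rightarrow(2)$ is a direct matrix computation: any non-central element of $\mathrm{U}_3(q^n)$ has the form $aE_{12}+cE_{23}+bE_{13}$ with $(a,c)\neq(0,0)$, its centralizer is the two-dimensional $\F_{q^n}$-subspace cut out by $az=cx$, and any two elements of this subspace bracket to zero. The substance lies in $(2)\Rightarrow(1)$. The Camina class-$2$ hypothesis forces $Z(L)=L'$ of $\F_q$-dimension $n$, and the commutator descends to an alternating bilinear map
\[
\beta\colon V\times V\to L',\qquad V:=L/Z(L),\ \dim V=2n,
\]
which is \emph{non-degenerate} in the sense that for each $0\neq\bar v\in V$ the map $\beta(\bar v,\cdot)$ is surjective with $n$-dimensional kernel $U_{\bar v}:=C_L(v)/Z(L)$. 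Hypothesis $(2)$ translates into the self-annihilation $\beta(U_{\bar v},U_{\bar v})=0$; since $\bar w\in U_{\bar v}$ then forces $U_{\bar v}\subseteq U_{\bar w}$ and hence equality by dimensions, the family $\{U_{\bar v}\}$ is a spread of $V$ with $q^n+1$ totally $\beta$-isotropic $n$-plane components.

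I now pick three spread components $A,B,C$ and write $V=A\oplus B$. The pairing $\mu:=\beta|_{A\times B}$ is non-degenerate in each variable (if $\mu(a,b)=0$ with $b\neq 0$, then $a\in U_{\bar b}=B$, forcing $a=0$), and $C$ is the graph of a linear bijection $\psi\colon A\to B$. Using $\psi$ to identify $B$ with $A$, and $b\mapsto\mu(a_0,b)$ for a fixed $a_0\neq 0$ to identify $B$ with $L'$, all three copies become $\F_q^n$ and $\mu$ becomes a bilinear multiplication $\circ\colon\F_q^n\times\F_q^n\to\F_q^n$ with no zero divisors. The self-annihilation of $C$ gives commutativity of $\circ$, and each remaining spread component encodes a linear bijection $T_\alpha\in\mathrm{GL}(\F_q^n)$ obeying
\[
a\circ T_\alpha(a')\;=\;a'\circ T_\alpha(a),\qquad a,a'\in\F_q^n.
\]

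To upgrade $(\F_q^n,\circ)$ to the field $\F_{q^n}$, fix $e\neq 0$ and define $a*b:=L_e^{-1}(a\circ b)$ with $L_e(x):=e\circ x$; then $*$ is commutative, bilinear, has two-sided identity $e$, and has no zero divisors. Specializing the displayed identity at $a'=e$ yields $T_\alpha(a)=T_\alpha(e)*a$, so each $T_\alpha$ is left $*$-multiplication by $c_\alpha:=T_\alpha(e)$; as $\alpha$ varies, $c_\alpha$ sweeps out $\F_q^n\setminus\{0\}$ (injectivity from the spread property, surjectivity by counting). The displayed identity now reads $(c*a')*a=(c*a)*a'$ for all $a,a',c$, and combined with commutativity it forces associativity via the short chain
\[
(ab)c\;=\;(ba)c\;=\;(bc)a\;=\;a(bc),
\]
where the outer equalities are commutativity and the middle one is the flexible-type identity. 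Hence $(\F_q^n,+,*)$ is a finite commutative associative $\F_q$-algebra of order $q^n$ with identity and no zero divisors, i.e., the field $\F_{q^n}$. Finally, the rescaling $\ell\mapsto e^{-1}\ell$ on the $L'$-component (interpreted in the field $(\F_q^n,*)$) converts the bracket of $L$ into the standard $\mathrm{U}_3(q^n)$-bracket, producing the isomorphism $L\cong\mathrm{U}_3(q^n)$ via $a\mapsto aE_{12}$, $a'\mapsto a'E_{23}$, $\ell\mapsto(e^{-1}\ell)E_{13}$.

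The main obstacle, I expect, is this semifield-to-field passage: one must bundle the $q^n-1$ separate symmetry conditions into a single flexible identity whose conjunction with commutativity quickly yields associativity. The isotopy $\circ\mapsto *$ simultaneously installs an identity and uniformizes the conditions, but a careful check is required to verify that this isotopy descends to a genuine Lie-algebra isomorphism rather than merely matching the commutator data up to a cocycle.
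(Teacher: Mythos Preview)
Your argument is correct and traces the same conceptual arc as the paper---recognize a (pre-)semifield from two abelian centralizers, then use \emph{all} abelian centralizers to force it to be a field---but your execution is more self-contained. The paper first realizes $L$ as a semifield Lie algebra $L(F)$ (Theorem~\ref{Camina_recognition_semifield_Lie_algebra}) and then quotes Lemma~\ref{commutative_isotopism} as a black box: one abelian centralizer makes $F$ isotopic to a commutative semifield, and once $F$ is commutative every further abelian centralizer forces the corresponding parameter into $\mathrm{Mid}(F)$, so $\mathrm{Mid}(F)=F$ and $F$ is a field. You instead read commutativity directly off the third spread component $C$ and extract the identity $(c*a')*a=(c*a)*a'$ from the remaining $q^n-1$ components; your chain $(ab)c=(ba)c=(bc)a=a(bc)$ is precisely the assertion $\mathrm{Mid}(F)=F$ in the commutative case, so you are reproving the quoted lemma in situ. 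This buys independence from the external reference at the cost of a little more bookkeeping with the spread.

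One small imprecision: you write the final $L'$-correction as $\ell\mapsto e^{-1}\ell$ ``in the field $(\F_q^n,*)$'', but $e$ is the $*$-identity, so that map is literally the identity---not what you need unless $e=a_0$. The correct adjustment is the $\F_q$-linear bijection $L_e^{-1}$, which need not be multiplication by a field element. The clean fix is to take $e:=a_0$ from the outset: your identification of $L'$ via $b\mapsto\mu(a_0,b)$ already makes $a_0$ a left identity for $\circ$, whence $L_{a_0}=\mathrm{id}$, $*=\circ$, and the isomorphism $(a,b,z)\mapsto aE_{12}+bE_{23}+zE_{13}$ works with no rescaling at all.
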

	
	The above result is an analogue of a similar result for Camina $p$-groups (see Theorem 3.8, \cite{nky}). The proof in the case of $p$-groups can be found in \cite{ve} and uses the construction of ultraspecial $p$-groups from finite semifields. For a survey of results in semi-extraspecial $p$-groups (equivalently Camina $p$-groups of nilpotency class $2$), ultraspecial $p$-groups and related ideas, we refer the reader to \cite{le}. We take a similar approach to prove Theorem~\ref{Characterization_Camina_Lie_algebra}.
	
	Let $L$ be a 2-step nilpotent Camina Lie algebra over $\F_q$ of dimension $3n$ and $\dim\;(L/L')=2n$. Further assume that $L$ has at least two abelian subalgebras of dimension $2n$ over $\F_q$. We show that such a Lie algebra $L$ can always be obtained from a finite semifield.
	\subsection{Finite Semifields and Semifield Lie algebras over finite fields}
	We first recall the definition of a semifield. Let $F$ be a non-empty set with two binary operations $+$ and $.$ such that $(F,+)$ is an abelian group. We call $F$ a pre-semifield if the following axioms hold:
	\begin{align*}
	&\hspace{-2.5 cm} a.(b+c)=a.b+a.c \hspace{1 cm}\text{for every $a,b,c\in F$}. \tag{F1}\\
	&\hspace{-2.5 cm}(b+c).a=b.a+b.c \hspace{1 cm} \text{for every $a,b,c\in F$}. \tag{F2}\\
	&\hspace{-2.5 cm} a.c=0 \implies a=0 \text{ or } c=0 \hspace{1 cm} \text{for every $a,c\in F$}. \tag{F3}
	\end{align*}
	Further, $F$ is called a semifield if there exists a multiplicative identity. Note that (F3) does not imply the existence of inverse. If $F$ is a finite semifield, then (F3) implies that every non-zero element has a left and right inverse. Thus $F$ is a (possibly non-associative) division ring.  In general, $F$ is a vector space over the prime field $\mathbb{F}_p$ consisting of $p$ elements, where $p$ is a prime. Thus $|F|$ is a power of a prime $p$. Further if $'.'$ is associative, by Weddernburn's theorem $F$ becomes a finite field. From now on we will assume $F$ is finite. Thus for us $F$ is a (possibly non-associative) division algebra over a finite field $\F_q$. For simplicity, we will say that $F$ is a semifield over the field $\F_q$. If the operation $'.'$ is non-associative, $F$ is called a proper semifield.
	
	\medskip
	
	Let $F$ be a finite semifield over a finite field $\F_q$. Thus $|F|=q^n$ for some $n\geq 1$. We now construct a Lie algebra of dimension $3n$ over the finite field $\F_q$ using $F$. Let $L(F):=\{(a,b,c) \mid a,b,c\in F\}=F\times F\times F$. The usual component-wise addition and scalar multiplication makes $L(F)$ a vector space of dimension $3n$ over $\F_q$. Define a bracket operation $[\ ,\ ]:L(F)\times L(F)\to L(F)$ by $[(a_1,b_1,c_1),(a_2,b_2,c_2)]=(0,0,a_1b_2-a_2b_1).$ It can be easily verified that $L(F)$ with the above bracket operation becomes a Lie algebra over $\F_q$. It is easy to see that $L(F)'=Z(L(F))=\{(0,0,a)\mid a\in F\}$ and further for every $x\in L(F)\setminus L(F)'$, $[x,L(F)]=L(F)'$. Thus $L(F)$ is a 2-step nilpotent Camina Lie algebra over $\F_q$. We have $\dim\;(L(F)/L(F)')=2n$. Consider the following two Lie subalgebras of $L(F)$.
	$$A^*=\{(a,0,c)\mid a,c\in F\} \;\;\;\text{and} \;\;\; B^*=\{(0,b,c)\mid b,c\in F\}.$$
	Clearly, $A^*$ and $B^*$ are both abelian Lie subalgebras of $L(F)$. Thus from a finite semifield $F$ over the finite field $\F_q$ such that $|F|=q^n$, we have constructed a 2-step nilpotent Camina Lie algebra $L(F)$ of dimension $3n$ over $\F_q$ with $\dim\;(L(F)/L(F)')=2n$. Further it has at least two abelian Lie subalgebra of dimension $2n$ over $\F_q$. We will call this the semifield Lie algebra associated to the semifield $F$. Note that if $F$ becomes a field, that is, $F\cong \F_{q^n}$ then $L(F)\cong \mathrm{U}_3(q^n)$. 
	
	\medskip
	
	There are many constructions of semifields over $\F_q$ (see \cite{ka, kn, cw}). The following example is of the classical Dickson's commutative semifields over $\F_q$.
	\begin{example}\label{Dickson}
		Let $q=p^n$ where $n>1$ and $p$ is an odd prime. Let $F$ be the two-dimensional vector space $F=\F_q\times \F_q$ over $\F_q$. Let $k$ be a non-square in $\F_q$. Define multiplication $*$ on $F$ as follows: for $(a,b)$ and $(c,d)$ in $F$,
		$$(a,b)*(c,d)=(ac+k\sigma(b)\sigma(d), ad+bc)$$
		where $\sigma$ is a non-identity field automorphism of $\F_q$. It can be easily checked that  $F$ is a commutative semifield over $\F_q$. In fact $*$ is always non-associative and hence $F$ is a proper semifield over $\F_q$.
	\end{example}

\subsection{2-step nilpotent Camina Lie algebra}

	The following recognition theorem is a major step towards the proof of Theorem~\ref{Characterization_Camina_Lie_algebra}.
	
	\begin{theorem}\label{Camina_recognition_semifield_Lie_algebra}
		Let $L$ be a 2-step nilpotent Camina Lie algebra of dimension $3n$ over a finite field $\F_q$. Suppose $\dim\;(L/L')=2n$ and there is at least two abelian Lie subalgebras of $L$ of dimension $2n$ over $\F_q$. Then there exists a finite semifield $F$ over $\F_q$ with $|F|=q^n$ such that $L\cong L(F)$.
	\end{theorem}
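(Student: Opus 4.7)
The plan is to imitate the semifield construction used for ultraspecial Camina $p$-groups in \cite{ve}: leverage the two prescribed abelian subalgebras to obtain a pre-semifield structure on an $n$-dimensional $\F_q$-vector space, and then normalize via the Albert--Kaplansky isotopy trick to a genuine semifield $F$ realizing $L\cong L(F)$.

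Let $A,B$ be two distinct abelian subalgebras of $L$ of dimension $2n$. Since $L$ is Camina of class $2$ with $\dim L'=n$ and $\dim C_L(x)=2n$ for every $x\in L\setminus L'$, for any $x\in A\setminus L'$ we have $A\subseteq C_L(x)$ with equality by dimension, whence $L'=Z(L)\subseteq A$; the same holds for $B$. If there were $y\in (A\cap B)\setminus L'$, then $A+B\subseteq C_L(y)$ would force $A=B$, a contradiction; hence $A\cap B=L'$ and then $A+B=L$ by a dimension count. Fix $\F_q$-linear identifications $A/L'\cong B/L'\cong L'\cong V:=\F_q^n$ and define a bilinear map $\mu:V\times V\to V$ by $\mu(\bar a,\bar b)=[a,b]$, which is well-defined because $L'=Z(L)$. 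The Camina property forces $\mu$ to have no zero-divisors: for $0\neq \bar a\in A/L'$, the identity $[a,L]=L'$ reduces to $[a,B]=L'$ since $A$ is abelian, and the kernel of $b\mapsto [a,b]$ on $B$ equals $C_L(a)\cap B=A\cap B=L'$ because $C_L(a)=A$ by dimension. A symmetric argument handles the second slot, so $(V,\mu)$ is a finite pre-semifield over $\F_q$.

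To upgrade $(V,\mu)$ to a semifield I would apply the Albert--Kaplansky trick: fix any nonzero $u\in V$, let $R_u,L_u:V\to V$ be the right and left multiplication maps under $\mu$ (both $\F_q$-linear bijections by the no-zero-divisor property), and define $a\star b:=\mu(R_u^{-1}a,\,L_u^{-1}b)$. A routine check shows $\star$ is bilinear, zero-divisor free, and has two-sided identity $e:=\mu(u,u)$, so $F:=(V,\star)$ is a semifield over $\F_q$ of order $q^n$. I would then define $\Phi:L\to L(F)$ on the decomposition $L=\overline A\oplus \overline B\oplus L'$ by $\Phi(\bar a+\bar b+c):=\bigl(R_u(\bar a),\,L_u(\bar b),\,c\bigr)$; using $[\bar a+\bar b+c,\bar a'+\bar b'+c']=\mu(\bar a,\bar b')-\mu(\bar a',\bar b)$ together with $R_u(\bar a)\star L_u(\bar b')=\mu(\bar a,\bar b')$, one verifies that $\Phi$ is an isomorphism of Lie algebras. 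The most delicate step is the Albert--Kaplansky renormalization combined with the simultaneous bookkeeping of the three identifications with $V$, chosen so that the pre-semifield multiplication $\mu$ and the semifield product $\star$ align in the $L(F)$ bracket; once that is set up correctly, the isomorphism $L\cong L(F)$ is essentially forced.
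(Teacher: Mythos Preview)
Your argument is correct and follows essentially the same route as the paper: split $L$ using the two abelian $2n$-dimensional subalgebras into $A_0\oplus B_0\oplus L'$, and read off a pre-semifield multiplication $\mu(\bar a,\bar b)=[a,b]$ on $V\cong\F_q^{\,n}$, whose nondegeneracy comes from the Camina condition. The only organizational difference is that you perform the Albert--Kaplansky isotopy $(V,\mu)\rightsquigarrow F$ and the resulting explicit isomorphism $\Phi$ inside this proof, whereas the paper simply asserts that $F$ is a semifield and $L\cong L(F)$, relegating the pre-semifield-to-semifield passage to the subsequent isotopy theorem and its remark; your version is the more careful of the two.
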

	
	\begin{proof}
		Suppose $A^*$ and $B^*$ are two abelian Lie subalgebras of $L$ of dimension $2n$ over $\F_q$. Since $L$ is a 2-step nilpotent Camina Lie algebra we must have $A^*\cap B^*=L'=Z(L)$. Thus $A^*=A\oplus L'$ and $B^{*}=B\oplus L'$ for some Lie subalgebra $A$ and $B$ of $L$ of dimension $n$ over $\F_q$. Thus $A\cong (\F_q)^n$ as $\F_q$-vector space. Same holds true for $B$ as well as $L'$. Further we also get that $L=A+B+L'$. Let $x,y\in L=A+B+L'$. Then $x=a_1+b_1+c_1$ and $y=a_2+b_2+c_2$ and $$[x,y]=[a_1+b_1+c_1,a_2+b_2+c_2]=[a_1,b_2]+[b_1,a_2]=[a_1,b_2]-[a_2,b_1].$$
		Consider the set $F=(\F_q)^n$ which is the $n$-dimensional vector space over $\F_q$. We give $F$ a semifield structure by defining multiplication $*:F\times F\to F$ as $a_1*b_2=[a_1,b_2]$ for all $a_1\in F,\; b_2\in F$. Here we consider the first copy of $F$ as $A$, the second one as $B$ and the product then goes to $L'$ whose underlying set is also $F$. One can check that $F$ becomes a semifield of order $q^n$. The semifield Lie algebra $L(F)$ associated to $F$ has the bracket operation $[(a_1,b_1,c_1),(a_2,b_2,c_2)]=a_1b_2-a_2b_1=[a_1,b_2]-[a_2,b_1]$. This immediately shows that $L\cong L(F)$ as $\F_q$-Lie algebras.
	\end{proof}
	We now recall the definition of isotopic semifields (see Section 9, \cite{le}). We say that two semifields $(F_1,+,*_1)$ and $(F_2,+,*_2)$ are \textbf{isotopic} if there exists additive isomorphisms $\alpha,\beta,\gamma:F_1\to F_2$ such that $\gamma(a*_1b)=\alpha(a)*_2\beta(b)$ for all $a,b\in F_1$. Isotopy of semifields is an equivalence relation. The next theorem says that isotopic semifields give rise to isomorphic semifield Lie algebras. 	We omit the proof since it is straight-forward. 
	
	\begin{theorem}\label{isotopic_semifields_isomorphic_Lie_algebra}
		If $F_1$ and $F_2$ are two finite isotopic semifields  then $L(F_1)$ is isomorphic to $L(F_2)$.
	\end{theorem}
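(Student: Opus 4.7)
The plan is to exhibit an explicit isomorphism built directly from the isotopy data. Writing $\alpha, \beta, \gamma : F_1 \to F_2$ for the additive isomorphisms satisfying $\gamma(a *_1 b) = \alpha(a) *_2 \beta(b)$ for all $a, b \in F_1$, I define
$$\Phi : L(F_1) \longrightarrow L(F_2), \qquad (a, b, c) \longmapsto (\alpha(a), \beta(b), \gamma(c)).$$

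First I would check that $\Phi$ is a linear bijection. Additivity holds componentwise from the additivity of $\alpha, \beta, \gamma$, and compatibility with the $\F_q$-scalar action follows because each of $\alpha, \beta, \gamma$ respects the $\F_q$-vector space structure on the semifields (if one reads "additive" as merely $\F_p$-linear, the same argument still produces an $\F_p$-Lie algebra isomorphism). Bijectivity of $\Phi$ is automatic since each of its three component maps is a bijection of finite sets of the same cardinality.

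The crux is bracket preservation, which is a one-line calculation. Using the definition of the bracket in $L(F_1)$,
$$\Phi\bigl([(a_1,b_1,c_1),(a_2,b_2,c_2)]\bigr) = \bigl(0,\,0,\,\gamma(a_1 *_1 b_2) - \gamma(a_2 *_1 b_1)\bigr),$$
and applying the defining identity of an isotopy to each of the two terms rewrites the right-hand side as $(0,\,0,\,\alpha(a_1) *_2 \beta(b_2) - \alpha(a_2) *_2 \beta(b_1))$. By the definition of the bracket in $L(F_2)$, this is exactly $[\Phi(a_1,b_1,c_1), \Phi(a_2,b_2,c_2)]$.

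There is no genuine obstacle here: the entire content of the proof is packaged into the isotopy condition, which is precisely engineered so that the third coordinate of the bracket transports correctly under the componentwise map $(\alpha, \beta, \gamma)$. This is exactly why the authors describe the verification as straightforward and omit the details.
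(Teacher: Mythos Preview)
Your proof is correct and is precisely the straightforward verification the paper alludes to when it omits the details: the componentwise map $(\alpha,\beta,\gamma)$ is the only natural candidate, and the isotopy identity is exactly what makes the third coordinate of the bracket match. Your parenthetical remark about $\F_p$- versus $\F_q$-linearity is a legitimate observation (the paper's definition of isotopy only demands additive isomorphisms), but it does not affect the downstream use of the theorem in the paper.
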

	
	\begin{remark}
		Note that any pre-semifield $P$ is isotopic to a semifield $F$. Further if $P$ is commutative so is $F$. The previous result shows that it suffices to work with a semifield in our case.
	\end{remark}

	Let us now consider the centralizer of a non-central element in the semifield Lie algebra $L(F)$ over $\F_q$ associated to a semifield $F$ (over $\F_q$) with $|F|=q^n$. Let $a_1,b_1\in F$ such that $a_1b_1\neq 0$. For an element $t:=(a_1,b_1,c_1)\in L(F)$, it is clearly seen that the centralizer $C_{L(F)}(t)$ is given by $C_{L(F)}(t)=\{(a_2,b_2,c_2)\mid a_1b_2=a_2b_1\}$. Following \cite{ks}, for $(a_1,b_1)\in F\times F$, we define $C_{(a_1,b_1)}:=\{(a_2,b_2)\in F\times F\mid a_1b_2=a_2b_1\}$. It follows that $C_{L(F)}(a_1,b_1,c_1)=C_{(a_1,b_1)}\times F$. 
	We will call $C_{(a_1,b_1)}$ abelian if $C_{a_1,b_1}\times F$ is abelian. For a commutative semifield $F$, we now define $\text{Mid}(F)=\{z\in F\mid x.(z.y)=(x.z).y \; \text{for every}\; x,y\in F\}.$ The set $\text{Mid}(F)$ is a finite field and $|\text{Mid}(F)|=q^h$ where $h\leq n$. $\text{Mid}(F)$ is called the middle nucleus of $F$. The following lemma is the final ingredient towards the proof of Theorem~\ref{Characterization_Camina_Lie_algebra}.
	
	\begin{lemma}[Lemma 4.3, \cite{ks}]\label{commutative_isotopism} Let $F$ be a finite semifield. Let $(a_1,b_1)\in F^2$ be such that  $a_1b_1\neq 0$ and $C_{(a_1,b_1)}$ is abelian. Let $x$ be the solution of $xa_1=b_1$. Then,
		\begin{enumerate}
			\item We have $C_{(a_1,b_1)}=C_{(1,x)}=\{(r,rx) \mid r\in F\}.$
			
			\item The semifield $F$ is isotopic to a commutative one.
			
			\item If $F$ is commutative then $x\in \rm{Mid}(F)$.
		\end{enumerate}
		Conversely, if $F$ is commutative then $C_{(1,x)}$ is abelian for all $x\in \rm{Mid}(F)$. 
	\end{lemma}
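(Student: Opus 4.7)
The plan is to reduce everything to a single symmetry identity extracted from the abelian hypothesis, and to exploit that in a finite semifield $F$ over $\F_q$, for any nonzero $z\in F$ both left multiplication $L_z$ and right multiplication $R_z$ are $\F_q$-linear bijections of $F$. I would begin with a dimension count: applying rank--nullity to the linear map $(a_2,b_2)\mapsto a_1b_2-a_2b_1$ (which is surjective onto $F$ whenever $a_1$ or $b_1$ is nonzero) shows $\dim_{\F_q}C_{(a_1,b_1)}=n$ for every $(a_1,b_1)$ with $a_1b_1\neq 0$.

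For part (1), I would pick the unique $x\in F$ with $a_1 x=b_1$ (the natural reading consistent with the membership $(1,x)\in C_{(a_1,b_1)}$); note $x\neq 0$ since $b_1\neq 0$. Because $(1,x)\in C_{(a_1,b_1)}$ and the latter is abelian, every $(a,b)\in C_{(a_1,b_1)}$ must satisfy $1\cdot b = a\cdot x$, i.e.\ $b=ax$. Hence $C_{(a_1,b_1)}\subseteq C_{(1,x)}=\{(a,ax)\mid a\in F\}$, and since both spaces have $\F_q$-dimension $n$ they coincide. Feeding the pair $(r,rx),(s,sx)\in C_{(1,x)}$ back into the abelian condition then produces the key identity
\begin{equation*}
r(sx)=s(rx)\quad\text{for all }r,s\in F.
\end{equation*}

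For part (2), I would define a new multiplication on the additive group of $F$ by $r\star s:=r(sx)$. The identity above says $\star$ is commutative; distributivity is inherited from $\cdot$, and $r\star s=0$ forces $r=0$ or $sx=0$ and hence $s=0$, so $(F,+,\star)$ is a commutative pre-semifield. The triple $(\alpha,\beta,\gamma)=(\mathrm{id},R_x^{-1},\mathrm{id})$ furnishes an isotopy from $(F,\cdot)$ to $(F,\star)$, because $\alpha(a)\star\beta(b)=a\cdot(R_x^{-1}(b)\cdot x)=a\cdot b$; Kaplansky's standard trick then promotes any pre-semifield to an isotopic semifield, giving (2). For part (3), assuming $F$ commutative and applying the symmetry identity with $r=u,\,s=v$, I compute
\begin{equation*}
u(xv)=u(vx)=v(ux)=(ux)v,
\end{equation*}
so $x\in\mathrm{Mid}(F)$. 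The converse is a direct verification: when $F$ is commutative and $x\in\mathrm{Mid}(F)$, the chain $a(bx)=(ab)x=(ba)x=b(ax)$ shows $C_{(1,x)}$ is abelian.

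The main obstacle I anticipate is the careful bookkeeping of the non-associative and non-commutative product; the subtlest step is part~(2), where one must confirm that the isotopy target is genuinely a semifield (not merely a pre-semifield) and pin down the correct reading of the relation ``$xa_1=b_1$'' in the lemma (replacing $a_1 x=b_1$ by $xa_1=b_1$ forces a symmetric argument with $L_{a_1}^{-1}$ replaced by $R_{a_1}^{-1}$ but changes nothing essential). Once the symmetry identity $r(sx)=s(rx)$ is in hand, however, all three parts and the converse follow by short algebraic manipulations.
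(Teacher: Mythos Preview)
The paper does not supply a proof of this lemma; it is quoted verbatim as Lemma~4.3 of \cite{ks} and used as a black box. So there is nothing in the paper to compare against, and your task is simply to give a self-contained argument. Your proposal does this correctly and is essentially the standard proof.

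Two small points. First, your reading $a_1x=b_1$ (rather than the printed $xa_1=b_1$) is the one consistent with the displayed conclusion $C_{(a_1,b_1)}=\{(r,rx)\mid r\in F\}$, since membership of $(1,x)$ in $C_{(a_1,b_1)}$ requires exactly $a_1x=b_1$; you are right to flag this. Second, in the converse your chain $a(bx)=(ab)x=(ba)x=b(ax)$ invokes $x$ in the \emph{right} nucleus, whereas the hypothesis only gives $x\in\mathrm{Mid}(F)$. The clean chain is
\[
a(bx)=a(xb)=(ax)b=b(ax),
\]
using commutativity, the middle-nucleus identity $u(xv)=(ux)v$, and commutativity again. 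With that correction the converse is fine, and the rest of your argument (the dimension count, the inclusion $C_{(a_1,b_1)}\subseteq C_{(1,x)}$ forced by abelianness, the commutative pre-semifield $r\star s=r(sx)$, and the explicit isotopy $(\mathrm{id},R_x^{-1},\mathrm{id})$) is correct as written.
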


	\begin{proof}[\bf Proof of Theorem~\ref{Characterization_Camina_Lie_algebra}]
		Using direct computations with matrices it is easy to verify that $L=\rm{U}_3(q^n)$ satisfies the theorem. Conversely, let $L$ be a 2-step nilpotent Camina Lie algebra of dimension $3n$ over $\F_q$ with $\dim\;(L/L')=2n$. Assume that the centralizer of every non-central element in $L$ is abelian. Since $L$ is Camina and $\dim\;(L/L')=2n$ we have $\dim\;(C_L(x))=2n$ for all $x\in L\setminus L'$. Considering any two such distinct centralizers, we can say that $L$ has at least two abelian Lie subalgebras of dimension $2n$ over $\F_q$.  Using Theorem~\ref{Camina_recognition_semifield_Lie_algebra}, we conclude that $L$ is isomorphic to a semifield Lie algebra $L(F)$ associated to some semifield $F$ (over $\F_q$) of order $q^n$. By the previous lemma and using Theorem~\ref{isotopic_semifields_isomorphic_Lie_algebra}, we can assume $F$ to be a commutative semifield. Once again by Lemma~\ref{commutative_isotopism}, and the fact that every centralizer is abelian it immediately follows that $\text{Mid}(F)=F$. Thus $F$ is a field of order $q^n$, whence it follows that $L\cong \rm{U}_3(q^n)$. 
	\end{proof}
	
	\begin{example}
		Consider the semifield $F$ of Example~\ref{Dickson} where the automorphism $\sigma$ is the Frobenius automorphism $\sigma:\F_q\to \F_q$ defined by $x\mapsto x^p$. $F$ is a proper semifield over $\F_q$. Note that $\alpha=(1,0)\in F$ is the multiplicative identity. Let $\0=(0,0)$ be the additive identity of $F$. Let $L:=L(F)$ be the semifield Lie algebra over $\F_q$ corresponding to $F$. Note that $\dim L(F)=6$. Further $Z(L)=L'=\{(\0,\0,(c_1,c_2))\mid c_1,c_2\in \F_q\}$. Let $\Gamma=(\alpha, \alpha',\0)$ where $\alpha'=(0,1)\in F$. Thus $\Gamma\in L\setminus Z(L)$. The centralizer $C_{L}(\Gamma)$ is 
		$$C_L(\Gamma)=\{((x_1,x_2),(kx_2^p,x_1),(c_1,c_2))\mid x_1,x_2,c_1,c_2\in \F_q \}.$$
		We show that $C_L(\Gamma)$ is non-abelian. Take $A=((x_1,x_2),(kx_2^p,x_1),(u_1,u_2)) \in L$ and $B=((y_1,y_2),(ky_2^p,y_1),(v_1,v_2))\in L$. Then $[A,B]=(\0,\0,\beta)$ where $\beta=(x_1,x_2)*(ky_2^p,y_1)-(kx_2^p,x_1)*(y_1,y_2)$. We get
		$$\beta=(k(x_1y_2^p+x_2^py_1^p-x_2^py_1-x_1^py_2^p), k(x_2y_2^p-x_2^py_2)).$$
		Let $x_1=1,x_2=1$. Then $\beta=(y_1^p-y_1,y_2^p-y_2)$. If $\beta=\0$ then we must have $y_1,y_2\in \F_p$. Since $q=p^n$ where $n>1$, it follows that $C_L(\Gamma)$ is non-abelian. This is consistent with the fact that $\text{Mid}(F)=\F_q\neq F$.
	\end{example}
	
\subsection{3-step nilpotent stem Lie algebras}

	As a consequence of Theorem~\ref{Characterization_Camina_Lie_algebra}, we get the following result.
	
	\begin{theorem}\label{central_quotient_recognition}
		Let $L$ be a 3-step nilpotent stem Lie algebra over $\F_q$ of breadth type $(0,2m)$ where $m\geq 1$. Then $L/Z(L)$ is isomorphic to $\rm{U}_3(q^m)$.
	\end{theorem}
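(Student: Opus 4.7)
The plan is to verify that $\overline{L} = L/Z(L)$ satisfies the hypotheses of Theorem~\ref{Characterization_Camina_Lie_algebra} with $n = m$, and then invoke it. Since $L$ is $3$-step nilpotent, $\overline{L}$ is $2$-step nilpotent. From Proposition~\ref{dimension_L_over_L'}, Corollary~\ref{dimension_derived_over_center} and Corollary~\ref{centralizer_info_L}(2), we read off $\dim(L/L') = 2m$, $\dim(L'/Z(L)) = m$, and $\dim C_L(x) = m + \dim Z(L)$ for $x \in L \setminus L'$. Consequently $\dim \overline{L} = 3m$, $\dim \overline{L}' = m$ and $\dim(\overline{L}/\overline{L}') = 2m$. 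Moreover, a short Jacobi argument gives $Z(\overline{L}) = \overline{L}'$: if $[x,L] \subseteq Z(L)$, then for all $y,z \in L$, $[x, [y,z]] = [[x,y],z] - [[x,z],y] = 0$, so $x$ centralizes $L'$; combined with Corollary~\ref{centralizer_info_L}(1) and $m \geq 1$, this forces $x \in L'$.

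Next I would verify the two substantive hypotheses of Theorem~\ref{Characterization_Camina_Lie_algebra}. That centralizers of non-central elements in $\overline{L}$ are abelian is the easier one: for $\overline{x} = x + Z(L)$ with $x \notin L'$ and $\overline{y}_1, \overline{y}_2 \in C_{\overline{L}}(\overline{x})$, the relations $[x, y_i] \in Z(L)$ and the Jacobi identity yield $[x, [y_1, y_2]] = 0$. Hence $[y_1, y_2] \in C_L(x) \cap L' = Z(L)$ by Corollary~\ref{centralizer_info_L}(1), so $[\overline{y}_1, \overline{y}_2] = 0$.

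The main obstacle is the Camina property: $[\overline{x}, \overline{L}] = \overline{L}'$ for every $\overline{x} \notin \overline{L}'$. I would establish it by a partial-spread argument. First, $W_x = \{y \in L : [x,y] \in Z(L)\}$ contains $C_L(x) + L'$, and the dimensions above give $\dim(C_L(x) + L') = 2m + \dim Z(L)$, whence $\dim C_{\overline{L}}(\overline{x}) \geq 2m$ for every non-central $\overline{x}$. Second, abelianness of each such centralizer implies that whenever two non-central $\overline{x}, \overline{y}$ commute one has $C_{\overline{L}}(\overline{x}) = C_{\overline{L}}(\overline{y})$: every $\overline{z} \in C_{\overline{L}}(\overline{x})$ commutes with $\overline{y}$, hence lies in $C_{\overline{L}}(\overline{y})$, and symmetry gives the reverse inclusion. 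Consequently the distinct centralizers $\{C_i\}$ of non-central elements meet pairwise in $\overline{L}'$, and their images in $\overline{L}/\overline{L}' \cong \F_q^{2m}$ partition $\F_q^{2m} \setminus \{0\}$ into subspaces of dimension at least $m$. Since $\overline{L}$ is non-abelian, at least two such $C_i$ exist; and for any two images $V_1, V_2$ with $V_1 \cap V_2 = 0$ and $\dim V_i \geq m$, the inequality $\dim V_1 + \dim V_2 \leq 2m$ forces $\dim V_1 = \dim V_2 = m$. Thus every $C_i$ has dimension $2m$, every non-central $\overline{x}$ has breadth $m$ in $\overline{L}$, and $\overline{L}$ is Camina. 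Theorem~\ref{Characterization_Camina_Lie_algebra} then yields $\overline{L} \cong \mathrm{U}_3(q^m)$.
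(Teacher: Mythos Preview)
Your proof is correct and, for the Camina verification, takes a genuinely different route from the paper's.

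Both proofs begin identically: establish $\dim\overline{L}=3m$, $\dim\overline{L}'=m$, and $\dim C_{\overline{L}}(\overline{x})\geq 2m$ via $C_L(x)+L'$. The paper then proves the Camina property \emph{first}, by contradiction: assuming some $\dim C_{\overline{L}}(\overline{x})>2m$, it picks $\overline{t}$ not commuting with $\overline{x}$, observes that $C_{\overline{L}}(\overline{x})\cap C_{\overline{L}}(\overline{t})$ must properly contain $\overline{L}'$ by dimension count, and then uses Jacobi together with $C_L(w)\cap L'=Z(L)$ to force $[x,t]\in Z(L)$, a contradiction. Only afterwards does the paper verify that centralizers are abelian, and for this it first identifies $C_{\overline{L}}(\overline{x})=\overline{C_L(x)+L'}$ and expands the bracket on the latter.

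You reverse the order, and this buys a cleaner argument. Your abelianness proof is direct on $C_{\overline{L}}(\overline{x})$ via Jacobi and Corollary~\ref{centralizer_info_L}(1), bypassing the identification $C_{\overline{L}}(\overline{x})=\overline{C_L(x)+L'}$ entirely. You then exploit abelianness to get the partial-spread structure: commuting non-central elements have \emph{equal} centralizers, so the distinct centralizers meet pairwise in $\overline{L}'$ and their images partition $\overline{L}/\overline{L}'\setminus\{0\}$. The existence of at least two such images (from non-abelianness of $\overline{L}$), each of dimension $\geq m$ inside a $2m$-dimensional space with trivial pairwise intersection, pins every dimension at exactly $m$. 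This is a nice structural argument that makes the ``partial spread'' viewpoint explicit, whereas the paper's proof-by-contradiction hides it. Both ultimately hinge on the same ingredients (Jacobi and $C_L(x)\cap L'=Z(L)$), just assembled differently.
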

	
	\begin{proof}
		It is enough to prove that $L/Z(L)$ satisfies the hypothesis of Theorem~\ref{Characterization_Camina_Lie_algebra}. Let $L/Z(L)= \overline{L}$. By Proposition~ \ref{dimension_L_over_L'} and Corollary~\ref{dimension_derived_over_center}, $\dim\;(L/L')=2m$ and
		$\dim\;(L'/Z(L))=m$. Hence
		$$\dim\;(\overline{L})=3m~~ \mbox{and}~~\dim\;(\overline{L}')=m.$$
		Thus $\dim\;(\overline{L}/\overline{L}')=2m$. Since $L'$ is abelian we must have $C_L(x)=L'$ for every $x\in L'\setminus Z(L)$.
		
		\medskip
		
		For $x\in L$, let $\overline{x}=x+Z(L)$. We now claim that $\dim\;(C_{\overline{L}}(\overline{x}))=2m$ for all $\overline{x}\in \overline{L}\setminus \overline{L}'$, from which it will follow that $\overline{L}$ is a Camina Lie algebra.
		
		\medskip
		
		Since $\overline{L}$ is of nilpotency class $2$, $\overline{L}'$ is contained in the centralizer of every element  in $\overline{L}$. Hence $\overline{C_L(x)+L'}\leq C_{\overline{L}}(\overline{x})$ for every $x\in L\setminus L'$. Since $\dim\;(C_L(x)/Z(L))=\dim\;(L'/Z(L))=m$ and $C_L(x)\cap L'=Z(L)$ (see Lemma~\ref{centralizer_info_L}), we get $\dim\;(\frac{C_L(x)+L'}{Z(L)})=2m$ and $\dim\;(\frac{L}{C_L(x)+L'})=m$. Hence $\dim\;(\overline{L}/C_{\overline{L}}(\overline{x}))\leq m$ and $\dim\;(C_{\overline{L}}(\overline{x}))\geq 2m$.
		
		\medskip
		
		Fix $x\in L\setminus L'$. If possible, suppose that
		$\dim\;(C_{\overline{L}}(\overline{x})) > 2m$, that is,
		$\dim\;(\overline{L}/C_{\overline{L}}(\overline{x}))<m$. We claim that
		$\overline{x}\notin Z(\overline{L})$. If not, let $\overline{x}\in
		Z(\overline{L})$. For any minimal generating set $\{x_1,\ldots, x_{2m}\}$ of $L$, $\{\overline{x}_1,\ldots,\overline{x}_{2m}\}$ is also a minimal generating set for $\overline{L}$. Then $[\overline{x},\overline{x}_i]=0$, that is, $[x,x_i]\in Z(L)$ for $1\le i\le 2m$. Then using Jacobi identity, for $1\le i\le 2m$, $[x_i,x_j]\in C_L(x)\cap L'=Z(L)$ (since $x\notin L'$); hence $L'\subseteq Z(L)$, a contradiction. Thus there exists $t\in L$ such that $[t,x]\notin Z(L)$. 
		
		Since $\dim\;(\overline{L}')=m$, we have $\dim\;(\overline{L}/C_{\overline{L}}(\overline{t})) \leq m$, and therefore it follows that $C_{\overline{L}}(\overline{x})\cap C_{\overline{L}}(\overline{t})$ contains $\overline{L}'$ properly. Take $\overline{w}\in \left(C_{\overline{L}}(\overline{x})\cap C_{\overline{L}}(\overline{t})\right)\setminus \overline{L}'$. Then $[w,x], [w,t]\in Z(L)$ with $w \notin L'$. Once again using Jacobi identity we can conclude that $[x,t]\in C_L(w) \cap L'=Z(L)$, a contradiction. Thus $\dim\;(C_{\overline{L}}(\overline{x}))=2m=\dim\;(\overline{C_L(x)+L'})$ and our claim is established. As a consequence we also get that $\overline{C_L(x)+L'}= C_{\overline{L}}(\overline{x}).$ 
		
		\medskip
		
		Finally to finish the proof we need to show that centralizers of non-central elements of $\overline{L}$ are abelian. Consider any $x\in L\setminus L'$. For $y_1,y_2\in C_L(x)$ we must have $[y_1,y_2]\in C_L(x)\cap L'=Z(L)$. Hence
		$[C_L(x),C_L(x)] \leq Z(L)$. Since $L$ is of nilpotency class $3$, we have $$[C_L(x)+L', C_L(x)+L'] = [C_L(x),C_L(x)]+ [C_L(x),L'] +[L',L']\leq Z(L),$$
		i.e. $C_{\overline{L}}(\overline{x})=\overline{C_L(x)+L'}$ is abelian. 
	\end{proof}
	
	\noindent The following is a consequence of the proof of the above theorem.
	
	\begin{corollary}\label{se4-cor1}
		Let $L$ be a 3-step nilpotent stem Lie algebra over $\F_q$ of breadth type $(0,2m)$ and  $u, v \in L \setminus L'$ be such that $[u,v]
		\in Z(L)$. Then there exists an element $h \in L'$ such that $[u,v+h] = 0$. In particular, $[u,v]\in\gamma_3(L)$. 
	\end{corollary}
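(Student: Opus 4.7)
The strategy is to translate the hypothesis $[u,v]\in Z(L)$ into a commuting relation in the central quotient $\overline{L}=L/Z(L)$ and then use the explicit description of centralizers in $\overline{L}$ that was extracted during the proof of Theorem~\ref{central_quotient_recognition}. Concretely, the hypothesis says that $\overline{v}\in C_{\overline{L}}(\overline{u})$, and since $u\notin L'$, the proof of Theorem~\ref{central_quotient_recognition} established the identity
\[
C_{\overline{L}}(\overline{u})=\overline{C_L(u)+L'}.
\]
So the main input is already available; no further structural work is needed.

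From this identity I would read off that $v\in C_L(u)+L'$, so we can write $v=w-h$ for some $w\in C_L(u)$ and $h\in L'$. Then $v+h=w\in C_L(u)$, which gives $[u,v+h]=0$ and proves the first assertion. For the ``in particular'' statement, note that the same decomposition yields
\[
[u,v]=[u,w-h]=-[u,h]=[h,u]\in[L',L]=\gamma_3(L),
\]
which finishes the proof.

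There is no real obstacle here: the only subtlety is making sure the description $C_{\overline{L}}(\overline{u})=\overline{C_L(u)+L'}$ from the preceding theorem is cleanly invoked (recall that this equality is what forced the dimension count $\dim C_{\overline{L}}(\overline{u})=2m$ in that proof, via Corollary~\ref{centralizer_info_L}). Once that is in hand, everything reduces to a one-line lift of the commuting relation from $\overline{L}$ back to $L$.
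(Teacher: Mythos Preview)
Your proposal is correct and follows essentially the same route as the paper: both pass to $\overline{L}=L/Z(L)$, invoke the identity $C_{\overline{L}}(\overline{u})=\overline{C_L(u)+L'}$ established in the proof of Theorem~\ref{central_quotient_recognition}, and then lift back to $L$. You simply spell out the decomposition $v=w-h$ and the deduction $[u,v]=[h,u]\in\gamma_3(L)$ more explicitly than the paper, which just writes ``the proof now follows.''
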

	
	\begin{proof}
		We have $[u+Z(L),v+Z(L)]=0$ which implies $v+Z(L)\in C_{\overline{L}}(u+Z(L))$. From the proof of the previous theorem we have $v\in C_{L}(u)+L'$. The proof now follows.
	\end{proof}
	
	For a Lie algebra $L$, the following is always true by virtue of the Jacobi identity.
	\begin{equation}\label{eqse3-5a}
		\mbox{ If } [a,b]\in Z(L) \mbox{ then } [[a,t],b]=[[b,t],a]
		\hskip5mm (a,b,t \in L).
	\end{equation}
	
	\noindent We conclude this section with the following information on the center of $L$.
	
	\begin{proposition}\label{dimension_center}
		Let $L$ be a 3-step nilpotent stem Lie algebra of breadth type $(0,2m)$. Then $Z(L)=\gamma_3(L)$ and $\dim\;(Z(L))=2m$.
	\end{proposition}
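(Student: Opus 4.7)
My plan for proving Proposition~\ref{dimension_center} has three steps. Since $L$ has class $3$, $\gamma_3(L)\subseteq Z(L)$ is automatic; I will prove the reverse inclusion, from which the dimension count follows. Using Theorem~\ref{central_quotient_recognition}, fix $\F_q$-linear sections $\tilde A,\tilde B,\tilde C\subseteq L$ of the three ``slot'' subalgebras of $L/Z(L)\cong \mathrm{U}_3(q^m)$ (each identified with $\F_{q^m}$), so that $L=\tilde A\oplus \tilde B\oplus \tilde C\oplus Z(L)$ and $L'=\tilde C\oplus Z(L)$; let $\phi\colon \F_{q^m}\to \tilde C$ be the induced section, and write $a\mapsto a_A,\,a\mapsto a_B$ for the $\F_q$-linear bijections $\F_{q^m}\to \tilde A,\tilde B$. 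Since $L'$ is abelian, $\gamma_3(L)=[L,L']=[\tilde A,\tilde C]+[\tilde B,\tilde C]$. Expanding the Jacobi identity on $a_A,a'_A\in \tilde A$, $b_B\in \tilde B$ and using $[a_A,a'_A]\in Z(L)$ gives $[a_A,[a'_A,b_B]]=[a'_A,[a_A,b_B]]$; writing $[a_A,b_B]=\phi(ab)+(\text{central})$, this becomes $[a_A,\phi(a'b)]=[a'_A,\phi(ab)]$. Specializing $b=1$, the pairing $(a,x)\mapsto [a_A,\phi(x)]$ factors through field multiplication, so $[\tilde A,\tilde C]=\mathrm{Im}(f)$ for the single $\F_q$-linear map $f(y)=[1_A,\phi(y)]$; and $f$ is injective because $f(y)=0$ with $y\ne 0$ would force $\phi(y)\in C_L(1_A)\cap L'=Z(L)$ by Corollary~\ref{centralizer_info_L}, contradicting $\phi(y)\in \tilde C\setminus\{0\}$. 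Hence $\dim [\tilde A,\tilde C]=m$, symmetrically $\dim [\tilde B,\tilde C]=m$ via the analogous map $g(y)=[1_B,\phi(y)]$, so $\dim \gamma_3(L)\le 2m$.

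Next, Corollary~\ref{se4-cor1} gives $[\tilde A,\tilde A],[\tilde B,\tilde B]\subseteq \gamma_3(L)$, so $L'=[\tilde A,\tilde B]+\gamma_3(L)$. By Corollary~\ref{centralizer_info_L}, $\dim C_L(x)=m+\dim Z(L)$ for every $x\in L\setminus L'$. For $x=1_A$, parametrizing $\bar y\in C_{\bar L}(\bar x)=\bar A+\bar C$ by $(a,c)\in \F_{q^m}^{2}$ converts the vanishing condition on $[y,x]$ into an $\F_q$-linear map $(a,c)\mapsto [a_A,1_A]-f(c)$ whose image must have dimension exactly $m$ in $Z(L)$; since $\mathrm{Im}(f)=[\tilde A,\tilde C]$ already has dimension $m$, this forces $[a_A,1_A]\in [\tilde A,\tilde C]$ for every $a$, and symmetrically $[b_B,1_B]\in [\tilde B,\tilde C]$ for every $b$. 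Repeating the analysis at $x=1_A+\beta_B$ for $\beta\in \F_{q^m}^\times$ brings in the diagonal map $c\mapsto f(c)+g(\beta c)$ in $\gamma_3(L)$; forcing the combined image (including the $[a_A,\beta_B]-[1_A,(a\beta)_B]$ cross term) to be $m$-dimensional yields simultaneously the direct-sum decomposition $\gamma_3(L)=[\tilde A,\tilde C]\oplus [\tilde B,\tilde C]$ (so $\dim\gamma_3(L)=2m$) and the cocycle identity $[a_A,b_B]-[1_A,(ab)_B]\in \gamma_3(L)$ for all $a,b\in \F_{q^m}$.

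Finally, take $z\in Z(L)\subseteq L'$: the decomposition $L'=[\tilde A,\tilde B]+\gamma_3(L)$ writes $z=\sum_i [a_{i,A},b_{i,B}]+\gamma$ with $\gamma\in \gamma_3(L)$, and centrality plus projection to $L'/Z(L)\cong \F_{q^m}$ force $\sum_i a_ib_i=0$ in $\F_{q^m}$. Since $\ker(\mu\colon \F_{q^m}\otimes_{\F_q}\F_{q^m}\to \F_{q^m})$ is $\F_q$-spanned by $\{a\otimes b-1\otimes ab\}$, the cocycle identity gives $\sum_i\bigl([a_{i,A},b_{i,B}]-[1_A,(a_ib_i)_B]\bigr)\in \gamma_3(L)$, while $\sum_i [1_A,(a_ib_i)_B]=[1_A,(\sum_i a_ib_i)_B]=[1_A,0_B]=0$ by $\F_q$-bilinearity; hence $z\in \gamma_3(L)$, proving $Z(L)=\gamma_3(L)$ of dimension $2m$. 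The main obstacle is the second step: extracting \emph{both} the direct-sum decomposition and the cocycle factorization from the single codimension equation at $x=1_A+\beta_B$ requires careful decomposition along $[\tilde A,\tilde C]$ vs.\ $[\tilde B,\tilde C]$, which is why the prior containments for $[a_A,1_A]$ and $[b_B,1_B]$ are essential and why dimension counts on $\gamma_3(L)$ alone do not suffice.
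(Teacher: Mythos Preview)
Your proof is correct and takes a genuinely different route from the paper's. The paper works coordinate-wise: it chooses explicit generators $x_1,\dots,x_m,y_1,\dots,y_m$ of $L$ over $L'$, sets $h_i=[x_1,y_i]$ and $z_i=[h_1,x_i]$, $z_{m+i}=[h_1,y_i]$, and shows by repeated use of the identity $[[a,t],b]=[[b,t],a]$ (valid whenever $[a,b]\in Z(L)$) that these $2m$ elements span all of $\gamma_3(L)$. For $Z(L)=\gamma_3(L)$ the paper argues by contradiction: a hypothetical $z\in Z(L)\setminus\gamma_3(L)$ would let one write some $[x_i,y_j]$ as $[x_1,y]+z$, and then the bracket $[x_i+y,\,x_1+y_j]$ lands in $Z(L)$, so Corollary~\ref{se4-cor1} forces $z\in\gamma_3(L)$. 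Your approach instead packages the $\mathrm U_3(q^m)$ structure into the field $\F_{q^m}$, proves a global cocycle identity $[a_A,b_B]-[1_A,(ab)_B]\in\gamma_3(L)$ via the centralizer-dimension constraint at $x=1_A+\beta_B$, and then kills any $z\in Z(L)$ directly by reducing to $\sum_i a_ib_i=0$ in $\F_{q^m}$ and applying the cocycle termwise. This is more invariant and avoids the paper's element-by-element bracket manipulations; the paper's version, on the other hand, never needs to invoke the tensor product $\F_{q^m}\otimes_{\F_q}\F_{q^m}$ or the field structure beyond what is encoded in $\mathrm U_3(q^m)$.

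One remark on Step~2: you are working harder than necessary for the direct-sum decomposition. Once Step~1 gives $\dim[\tilde A,\tilde C]=\dim[\tilde B,\tilde C]=m$ and hence $\dim\gamma_3(L)\le 2m$, the reverse inequality is immediate from the breadth hypothesis itself: for any $h\in L'\setminus Z(L)$ (say $h=\phi(1)$) one has $[h,L]\subseteq\gamma_3(L)$ with $\dim[h,L]=2m$, so $\gamma_3(L)=[\tilde A,\tilde C]\oplus[\tilde B,\tilde C]$ without further analysis. The computation at $x=1_A+\beta_B$ is still needed for the cocycle identity (the key point being that $c\mapsto f(c)+g(\beta c)$ is injective by Corollary~\ref{centralizer_info_L}, so $D_\beta$ already fills the entire $m$-dimensional image and absorbs the cross term), but separating the two conclusions makes the logic cleaner and shows that, contrary to your closing comment, the direct sum really does follow from dimension counts alone.
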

	
	\begin{proof}
		Consider $x_1\in L\setminus L'$. From Corollary~\ref{dimension_derived_over_center} and  Proposition~\ref{dimension_L_over_L'}, we have that $C_L(x_1)\cap L'=Z(L)$, $\dim\;(C_L(x_1)/Z(L)) = \dim\;(L'/Z(L))=m$ and $\dim\;(L/L')=2m$.  Let $C_L(x_1)=\langle x_1,\ldots,x_m,Z(L)\rangle$. Consider $y_1\in L\setminus C_L(x_1)+L'$. Let $C_L(y_1)=\langle y_1,\ldots, y_m,Z(L)\rangle$.
		Then $C_{\overline{L}}(\overline{x}_1)=\langle \overline{x}_1,\ldots,\overline{x}_m,\overline{L}'\rangle$ and
		$C_{\overline{L}}(\overline{y}_1)=\langle
		\overline{y}_1,\ldots,\overline{y}_m,\overline{L}'\rangle$ are distinct proper centralizers of $\overline{L}$. Using Theorem~\ref{central_quotient_recognition}, they generate $\overline{L}$. It follows that $\{ x_1,\ldots, x_m,y_1,\ldots,y_m\}$ is a (minimal) generating set for $L$.
		Define
		\begin{align*}
			[x_1,y_i] =h_i
			\hskip5mm (1\le i \le m).
		\end{align*}
		From Theorem~\ref{central_quotient_recognition}, we get that $h_1,\ldots, h_m$ are independent modulo
		$Z(L)$, and  $$L'=\langle h_1,\ldots, h_m,Z(L)\rangle.$$
		
		Now we define
		$$[h_1,x_i]=z_i,  \mbox{ and }  [h_1,y_i]=z_{m+i} \hskip5mm (1 \le i \le m).$$ Since $x_1, \dots, x_m, y_1, \dots, y_m$ are independent modulo $L' = C_L(h_1)$, it follows that
		$$[h_1,x_1],\ldots, [h_1,x_m], [h_1,y_1],\ldots, [h_1,y_m]$$
		are independent and they generate a subalgebra  $K$ of dimension 2m in $\gamma_3(L)$.
		
		\medskip
		
		We now proceed to show that $K=\gamma_3(L)$. It is sufficient to show that for any $h\in L'\setminus Z(L)$, $[h,x_i],[h,y_i]\in K$ for $1 \le i \le m$. For any $h\in L'\setminus Z(L)$ and a fixed $i$ with $1 \le i \le m$, consider $[h,x_i]$. Let $A=\langle x_1,\ldots, x_m,Z(L)\rangle$ and $B=\langle y_1,\ldots,y_m,Z(L)\rangle$. Since $\langle [x_1,y_1], \dots, [x_1,y_m],  Z(L) \rangle=L'$, there exists $y\in B$ such that $h =[x_1,y]+Z(L)$. Then by \eqref{eqse3-5a},
		\begin{align*}
			[h,x_i] &=[[x_1,y],x_i]=[[x_i,y],x_1].
		\end{align*}
		Again since $\langle [x_1,y_1],\dots, [x_m,y_1], Z(L) \rangle=L'$, there exists $x\in A$ such that $[x_i,y]=[x,y_1]+Z(L)$. Once again by
		\eqref{eqse3-5a}, we get
		\begin{align*}
			[h,x_i]=[[x_i,y],x_1] =[[x,y_1],x_1]&=[[x_1,y_1],x]=[h_1,x]\in K.
		\end{align*}
		Similarly we can show that $[h,y_i]\in K$, $1\le i\le m$; hence $K=\gamma_3(L)$ and is of dimension 2m over $\F_q$.
		
		\medskip
		
		Finally we show that  $Z(L)=\gamma_3(L)$. For this, since
		$[x_1,y_1],\ldots, [x_1,y_m]$ are independent modulo $Z(L)$ and
		$$L'=\langle [x_1,y_1], \ldots, [x_1,y_m], Z(L)\rangle,$$
		it suffices to prove that
		\begin{equation}\label{eqse3-6}
			L'=\langle [x_1,y_1], \ldots, [x_1,y_m], \gamma_3(L)\rangle.
		\end{equation}
		Note that $\gamma_3(L)\subseteq Z(L)$ and $\gamma_3(L)=\langle z_1,\ldots, z_{2m}\rangle$. Also
		$$A=\langle x_1,\ldots, x_m,Z(L)\rangle=C_L(x_1) \mbox{ and } B=\langle y_1,\ldots,y_m,Z(L)\rangle=C_L(y_1).$$
		
		If \eqref{eqse3-6} does not hold then there exist $z\in Z(L)\setminus \gamma_3(L)$ and a Lie bracket $[x_i, y_j]$ for some $i,j$ with $1\leq i,j\leq m$,  such that
		\begin{equation*}
			[x_i,y_j]=e_1[x_1,y_1]+\cdots+e_m[x_1,y_m]+z,
		\end{equation*}
		where $e_i \in \mathbb{F}_q$ for $1 \le i \le m$.
		Let $y=\sum\limits_{i=1}^{m}e_iy_i$. Then the preceding equation implies
		$[x_i,y_j]-[x_1,y]-z=0$. Now since $[x_i,x_1]=0$, we have
		$$[x_i+y, x_1+y_j]=[x_i,y_j]+[y,x_1]+[y,y_j]= z+[y,y_j].$$ 
		Since $[y,y_j]\in Z(L)$, this implies $[x_i+y,x_1+y_j]\in Z(L)$. By Corollary \ref{se4-cor1}, both  $[y,y_j]$ and $ [x_i+y,x_1+y_j]$  both belong to $\gamma_3(L)$. Consequently $z\in \gamma_3(L)$, a contradiction. 
		
		\noindent This proves that \eqref{eqse3-6} holds. Hence
		$\gamma_3(L)=Z(L)$ and the proof is complete.
	\end{proof}
	

	\section{Proof of Theorem~\ref{even_dimension_three_step_nilpotent}}\label{section 5}
		Let $L$ be a 3-step nilpotent stem Lie algebra over $\F_q$ ($q\geq 3$) of breadth type $(0,2m)$. We have shown that $L/Z(L)$ is isomorphic to the Lie algebra ${\rm U}_3(q^m)$ over $\F_q$ (see Theorem~\ref{central_quotient_recognition}). Our strategy for proving Theorem~\ref{even_dimension_three_step_nilpotent} is to obtain a presentation of the Lie algebra $L$ from a presentation of ${\rm U}_3(q^m)$. Until the proof of Theorem~\ref{even_dimension_three_step_nilpotent}, we assume that $m\geq 2$. Once again we assume $q\geq 3$ throughout unless otherwise mentioned.
		
		\medskip
		
		At first we obtain a set of structure constants of ${\rm U}_3(q^m)$. Recall that $\mathbb{F}_{q^m}$ denotes the finite field of order $q^m$. Then $\mathbb{F}_{q^m}=\mathbb{F}_q(\alpha)$, where $\alpha$ satisfies a monic irreducible polynomial of degree $m$ over $\mathbb{F}_q$. Consider the following matrices in ${\rm U}_3(q^m)$ for any integer $i \ge 1$:
		$$X_i=
		\begin{bmatrix}
		0 &  \alpha^{i-1} & 0\\
		  &  0            & 0 \\
		  &               & 0
		\end{bmatrix},
		\hskip5mm
		Y_i=
		\begin{bmatrix}
		0 &  0 & 0\\
		  &  0 & \alpha^{i-1} \\
		  &    & 0
		\end{bmatrix},
		\hskip5mm
		H_i=
		\begin{bmatrix}
		0 &  0 & \alpha^{i-1} \\
		&  0 & 0 \\
		&    & 0
		\end{bmatrix},
		$$
	Then it is easy to see that $\Theta = \{X_1,\ldots, X_m, Y_1,\ldots, Y_m\}$ is a minimal generating set for ${\rm U}_3(q^m)$ and $\{H_1,\ldots, H_m\}$ is a minimal generating set for the center (as well as the derived subalgebra) of ${\rm U}_3(q^m)$. Further, these matrices satisfy the following relations.
	\begin{align}
	& [X_i,X_j]=[Y_i,Y_j]=0,\label{eqse5-1}\\
	& [H_i,X_j]=[H_i,Y_j]=0,\label{eqse5-2}\\
	& [X_i,Y_j]=H_{i+j-1} \mbox{ for all }i,j\geq 1.\label{eqse5-3}
	\end{align}

	Since $\mathbb{F}_q(\alpha)$ is a vector space over $\mathbb{F}_q$ with basis $\{1,\alpha,\ldots, \alpha^{m-1}\}$, for $\alpha^{i+j-2} \in\mathbb{F}_q(\alpha)$, there exist unique $\kappa_{i,j,1}, \ldots, \kappa_{i,j,m}\in\mathbb{F}_q$ such that
	$$\alpha^{i+j-2} = \kappa_{i,j,1} + \kappa_{i,j,2}\,\alpha + \cdots +
	\kappa_{i,j,m}\,\alpha^{m-1}.$$
	Then by \eqref{eqse5-3}, we have
	$$H_{i+j-1} = \sum_{t=1}^{m} \kappa_{i,j,t}H_t = \left[X_1,\sum_{t=1}^{m}\kappa_{i,j,t}Y_t\right],$$
	which in turn  implies that
	\begin{align}\label{eqse5-4}
	[X_i,Y_j]=[X_1,\kappa_{i,j,1}Y_1+\kappa_{i,j,2}Y_2+\cdots +
	\kappa_{i,j,m}Y_m] \hskip5mm (1\leq i,j\leq m).
	\end{align}
	The constants $\kappa_{i,j,l}$ for $1\leq i,j,l\leq m$ are the structure constants of ${\rm U}_3(q^m)$ with respect to the set of minimal generators given by $\Theta$. Note that the set $\Theta$ depends on the field generator $\alpha$ of $\F_{q^m}$ over $\F_q$. Thus the structure constants (which depends on $\Theta$) depends on the choice $\alpha$.
	
	\medskip
	
	For $1\leq i,j,l\leq m$, the generators $X_i,Y_i,H_i$, the constants $\kappa_{i,j,l}$, and the relations \eqref{eqse5-1} - \eqref{eqse5-3} give a presentation of the Lie algebra ${\rm U}_3(q^m)$. It is clearly seen that
	\begin{equation}\label{eqse5-6}
		\kappa_{i,j,l}=\kappa_{j,i,l}.
	\end{equation}
 and also that  $[X_i,Y_j]=[X_j,Y_i]$.
	From \eqref{eqse5-4} with the relation $[X_i,Y_j]=[X_j,Y_i]$,
	we obtain
	\begin{align}\label{eqse5-7}
	[X_j,Y_i]=[\kappa_{i,j,1}X_1 + \kappa_{i,j,2}X_2+ \cdots
	+\kappa_{i,j,m}X_m,Y_1] \hskip5mm (1\leq i,j\leq m).
	\end{align}

	We now construct a presentation of a stem Lie algebra $L$ of breadth type $(0,2m)$.

	\begin{lemma}\label{presentation_class3_(0,m)_Liealgebra}
		Let $L$ be a 3-step nilpotent stem Lie algebra over the finite field $\F_q$ of breadth type $(0,2m)$. Then there exist
		$$\alpha_{i,j,l}, ~~ \beta_{i,j,l}, ~~ \gamma_{i,j,l}, ~~ \delta_{i,j,l}, ~~
		\lambda_{i,j,l}\in
		\F_q \hskip5mm (1\leq i, j \leq m, \,\,1\leq l \le 2m),$$
		such that $L$ admits the following presentation:
		\begin{align*}
		L=\Big{\langle}  &x_1,\ldots, x_m,y_1,\ldots, y_m, h_1, h_2,\ldots, h_m, z_1,
		z_2, \ldots, z_{2m}~~ \Big{|} \\
		& [z_k,z_r]=[z_k,x_i]=[z_k,y_i]=[z_k,h_i]=0 \hskip3mm (1\leq k,r,\leq 2m,
		1\leq i\leq m), \tag{R1}\\
		& [h_i,h_j]=0 \hskip5mm (1\leq i,j\leq m),\tag{R2}\\
		& [h_i, x_j]=\sum_{l=1}^{2m} \gamma_{i,j,l}z_l \hskip5mm (1\leq i,j\leq
		m), \tag{R3}\\
		& [h_i, y_j]=\sum_{l=1}^{2m}\delta_{i,j,l}z_l \hskip5mm (1\leq i,j\leq
		m), \tag{R4}\\
		& [x_i,x_j]=\sum_{l=1}^{2m} \alpha_{i,j,l}z_l\hskip5mm (1\leq i,j\leq m),
		\tag{R5}\\
		& [y_i,y_j]=\sum_{l=1}^{2m} \beta_{i,j,l}z_l\hskip5mm (1\leq i,j\leq m),
		\tag{R6}\\
		& [x_1, y_i] = h_i, ~~[h_1, x_i] = z_i, ~~[h_1, y_i] = z_{m+i} \hskip3mm
		(1\leq i\leq m) \tag{R7},\\
		& [x_i,y_j]=\sum_{t=1}^{m}\kappa_{i,j,t}h_t+ \sum_{l=1}^{2m} \lambda_{i,j,l}z_l  \hskip5mm (1\leq
		i,j\leq m)\tag{R8} \Big{\rangle}. 
		\end{align*}
		where   $\kappa_{i,j,l}$,  $1 \le i, j, l \le m$, are the structure constants of ${\rm U}_3(q^m)$.
	\end{lemma}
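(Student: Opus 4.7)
The plan is to reuse the generating data produced in the proof of Proposition~\ref{dimension_center}, verify that all brackets among those generators lie in the expected subspaces, and then close the argument with a dimension count. Concretely, I would pick $x_1\in L\setminus L'$ and $y_1\in L\setminus(C_L(x_1)+L')$, then take $x_2,\ldots,x_m$ so that $C_L(x_1)=\langle x_1,\ldots,x_m,Z(L)\rangle$ and $y_2,\ldots,y_m$ so that $C_L(y_1)=\langle y_1,\ldots,y_m,Z(L)\rangle$, chosen so that the images $\overline{x_i},\overline{y_j}$ in $L/Z(L)$ correspond, via the isomorphism $L/Z(L)\cong\mathrm{U}_3(q^m)$ of Theorem~\ref{central_quotient_recognition}, to the matrices $X_i,Y_j$ of the minimal generating set used to fix the structure constants $\kappa_{i,j,l}$. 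I would then set $h_i:=[x_1,y_i]$ and $z_i:=[h_1,x_i]$, $z_{m+i}:=[h_1,y_i]$ for $1\leq i\leq m$. By the proof of Proposition~\ref{dimension_center}, the $h_i$ form a basis of $L'/Z(L)$ and the $z_l$ a basis of $\gamma_3(L)=Z(L)$, so these $5m$ elements span $L$ as an $\F_q$-vector space.

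Next I would verify the relations one by one. Relations (R1) and (R2) are immediate: each $z_l\in Z(L)$ and $L'$ is abelian because $L$ has nilpotency class $3$. For (R3) and (R4), $[h_i,x_j]$ and $[h_i,y_j]$ lie in $[L',L]\subseteq \gamma_3(L)=Z(L)$, which is spanned over $\F_q$ by $z_1,\ldots,z_{2m}$, so the scalars $\gamma_{i,j,l},\delta_{i,j,l}$ exist. For (R5) and (R6), the proof of Theorem~\ref{central_quotient_recognition} shows that in $\overline{L}=L/Z(L)$ the centralizer of $\overline{x_1}$ is abelian, and $\overline{x_i}\in C_{\overline{L}}(\overline{x_1})$ gives $[\overline{x_i},\overline{x_j}]=0$, i.e.\ $[x_i,x_j]\in Z(L)$; (R6) is analogous via $\overline{y_1}$. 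Relations (R7) hold by the definitions of $h_i$ and $z_l$. For (R8), the identification $L/Z(L)\cong\mathrm{U}_3(q^m)$ together with the identity $[X_i,Y_j]=\sum_{t=1}^m\kappa_{i,j,t}H_t$ in $\mathrm{U}_3(q^m)$ yields $[x_i,y_j]-\sum_{t=1}^m\kappa_{i,j,t}h_t\in Z(L)$, which by definition is an $\F_q$-linear combination of the $z_l$'s, producing $\lambda_{i,j,l}$.

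Finally I would argue that the listed relations actually present $L$. Let $\widetilde L$ be the Lie algebra abstractly generated by the symbols $x_i,y_j,h_k,z_l$ subject to (R1)-(R8). Since each relation holds in $L$, there is a surjective Lie algebra homomorphism $\widetilde L\twoheadrightarrow L$. Using (R3)-(R8) to rewrite every bracket of two generators as an $\F_q$-linear combination of the $5m$ named generators, and (R1)-(R2) to kill all remaining nested brackets, I would see that $\widetilde L$ is spanned by $\{x_i,y_j,h_k,z_l\}$, hence $\dim_{\F_q}\widetilde L\leq 5m=\dim_{\F_q}L$, forcing the surjection to be an isomorphism. The main obstacle is the bookkeeping in the first step: one must choose the lifts $x_i,y_j$ so that their images in $L/Z(L)$ match the specific minimal generating set $\{X_i,Y_j\}$ of $\mathrm{U}_3(q^m)$ used to fix the $\kappa_{i,j,t}$ (which themselves depend on a field generator $\alpha$ of $\F_{q^m}/\F_q$). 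Once this matching is arranged, the verification of (R1)-(R8) and the dimension count are straightforward.
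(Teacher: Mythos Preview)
Your approach is correct and essentially the same as the paper's: fix the isomorphism $L/Z(L)\cong\mathrm{U}_3(q^m)$, lift the generators $X_i,Y_i$ to $x_i,y_i\in L$, set $h_i:=[x_1,y_i]$ and $z_i:=[h_1,x_i]$, $z_{m+i}:=[h_1,y_i]$, and read off the relations. The paper does exactly this, though in a slightly different order (it first chooses arbitrary lifts $h_i$ and an arbitrary basis of $Z(L)$, then adjusts them to satisfy (R7)), and it justifies (R5)--(R6) directly from $[X_i,X_j]=[Y_i,Y_j]=0$ rather than via the abelian-centralizer property, which is of course equivalent.

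Two small remarks. First, the extra constraint you impose that $C_L(x_1)=\langle x_1,\ldots,x_m,Z(L)\rangle$ is not actually needed for the lemma and is not something the paper arranges; all that matters is that the images $\overline{x_i}$ match $X_i$, which already forces $[x_i,x_j]\in Z(L)$. Second, your final paragraph (building $\widetilde L$ and comparing dimensions) is a genuine addition: the paper is content to exhibit a spanning set of size $5m$ and verify the brackets, leaving the ``presentation'' claim informal, whereas you make it precise. This is a nice tightening, though not strictly necessary for how the lemma is used downstream.
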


	\begin{proof}
	By Proposition~\ref{dimension_L_over_L'}, Corollary~\ref{dimension_derived_over_center} and Proposition~\ref{dimension_center} we have
	$$\dim\;(L/L')=2m, ~~ \dim\;(L'/Z(L))=m, ~~ \dim\;(Z(L))=2m.$$
	We will obtain the required presentation of $L$ from the presentation of $L/Z(L) \cong {\rm U}_3(q^m)$.
	
	\medskip
	
	Since $\dim\;(Z(L))=2m$, $Z(L)$ is minimally spanned by $2m$ elements  say $z_1,\ldots, z_{2m}$. Let $\varphi:L\rightarrow {\rm U}_3(q^m)$ denote a surjective Lie-algebra homomorphism with $\ker\varphi=Z(L)$. Recall that $X_i$'s, $Y_i$'s and $H_i$'s are  the generators of ${\rm U}_3(q^m)$. Choose $ x_1,\ldots, x_m, y_1,\ldots, y_m$, $h_1,\ldots, h_m$
	in $L$ such that
	$$\varphi(x_i)=X_i, \hskip3mm \varphi(y_i)=Y_i\hskip3mm \mbox{ and } \hskip3mm \varphi(h_i)=H_i\hskip5mm (1\leq i\leq m).$$
	It is clear that
	$$\{x_1,\ldots, x_m,y_1,\ldots, y_m, h_1, h_2,\ldots, h_m, z_1,
	z_2, \ldots, z_{2m}\}$$
	spans $L$, and the set $$\{h_1, h_2,\ldots, h_m, z_1, z_2, \ldots, z_{2m}\}$$ spans $L'$.
	
	\medskip
	
	The relations in (R1) are obvious. Since $L'$ is abelian, we get $[h_i,h_j]=0$ for every $1\le i,j\le m$. This gives the relations in (R2). The relations
	\eqref{eqse5-1} and \eqref{eqse5-3} give the relations (R3)-(R6) for some $\alpha$'s, $\beta$'s, $\gamma$'s, $\delta$'s in $\mathbb{F}_q$. Further, the relations (R8) can be easily obtained from the fact that
	$$[X_i,Y_j]=\sum_{t=1}^{m} \kappa_{i,j,t}H_{t} \;\;\;\;\;\; (1\leq i,j\leq m).$$
	
	Finally we obtain the relation in (R7). Since, for $1 \le i \le m$,  $[X_1, Y_i] = H_i$ we have $[x_1, y_i] - h_i \in Z(L).$ Thus $[x_1, y_i] = h_i+w_i$ for some $w_i \in Z(L)$. Replacing $h_i$ by $h_i+w_i$, which do not violate any of the preceding relations we obtain
	\begin{equation}\label{lemma 5.1_eqn1}
		[x_1, y_i] = h_i \hskip5mm (1 \le i \le m).
	\end{equation}
	Finally, we have $h_1\in L'\setminus Z(L)$. Further, $\dim\;(L/L')=2m$ and $L'$ is abelian, hence $C_L(h_1)=L'$. Since $x_1,\ldots, x_m, y_1,\ldots, y_m$ are independent modulo $C_L(h_1)=L'$, it follows that
	for each $1\leq i\leq m$, the $2m$ commutators $[h_1,x_i]$ and $[h_1,y_i]$ are all linearly independent elements of $\gamma_3(L)=Z(L)$ (see Proposition~\ref{dimension_center}). Thus without loss of generality, we can take $[h_1,x_i]=z_i$  and $[h_1,y_i]=z_{m+i}$  for $1
	\le i \le m$. This completes the proof.
	\end{proof}

	\begin{lemma}\label{suitable_generators}
		The generators $x_j$'s and $y_j$'s of $L$ (as in Lemma~\ref{presentation_class3_(0,m)_Liealgebra}) can be chosen such that
		$$[x_1,x_j]=[y_1,y_j]=0 \hskip5mm (2 \le j \le  m).$$
	\end{lemma}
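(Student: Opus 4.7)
The plan is to apply Corollary~\ref{se4-cor1} directly to the generators produced in Lemma~\ref{presentation_class3_(0,m)_Liealgebra}. By relation (R5), $[x_1,x_j]=\sum_{l}\alpha_{1,j,l}z_l\in Z(L)$ for every $2\le j\le m$, and likewise by (R6) we have $[y_1,y_j]\in Z(L)$. Since $x_1,x_j,y_1,y_j\in L\setminus L'$ (they form part of a minimal generating set of $L$ modulo $L'$), Corollary~\ref{se4-cor1} produces elements $\xi_j,\zeta_j\in L'$ such that
$$[x_1,\,x_j+\xi_j]=0 \quad\text{and}\quad [y_1,\,y_j+\zeta_j]=0 \hskip5mm (2\le j\le m).$$
My plan is then to replace $x_j$ by $\widetilde{x}_j:=x_j+\xi_j$ and $y_j$ by $\widetilde{y}_j:=y_j+\zeta_j$, and set $\widetilde{x}_1:=x_1$, $\widetilde{y}_1:=y_1$. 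By construction these satisfy the desired vanishing.

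Next I would verify that the replacement does not destroy any other part of the presentation. Because $\xi_j,\zeta_j\in L'$, the new generators are congruent to the old ones modulo $L'$, so $\{\widetilde{x}_i,\widetilde{y}_i\}$ remains a minimal generating set of $L$. Defining $\widetilde{h}_i:=[\widetilde{x}_1,\widetilde{y}_i]$, we have $\widetilde{h}_i=h_i+[x_1,\zeta_i]$ with $[x_1,\zeta_i]\in Z(L)$, so the $\widetilde{h}_i$ still form a basis of $L'/Z(L)$. Moreover, since $L'$ is abelian and $h_1\in L'$, we get $[h_1,\xi_i]=[h_1,\zeta_i]=0$, so $\widetilde{h}_1=h_1$ and the central basis is preserved:
$$[\widetilde{h}_1,\widetilde{x}_i]=[h_1,x_i]=z_i, \hskip5mm [\widetilde{h}_1,\widetilde{y}_i]=[h_1,y_i]=z_{m+i}.$$

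For the remaining relations (R1)--(R8), the key observation is that each correction term lies in $L'$ and therefore commutes with $L'$. Consequently (R3) and (R4) are preserved with the same $\gamma_{i,j,l}$ and $\delta_{i,j,l}$, while (R5), (R6), (R8) take the same form with the structure constants $\alpha,\beta,\lambda$ adjusted by terms arising from $[x_i,\zeta_j]$, $[\xi_i,y_j]$, $[\xi_i,x_j]$, $[\zeta_i,y_j]$, all of which land in $\mathrm{span}(z_l)$. Crucially, the $\kappa_{i,j,t}$ in (R8) are unchanged, because modifying the $x_i,y_j$ by elements of $L'$ only perturbs the bracket $[\widetilde{x}_i,\widetilde{y}_j]$ inside $Z(L)$, which corresponds to changes only in the $\lambda$-coefficients, not the $h_t$-coefficients.

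I do not expect a substantial obstacle: the genuine work, namely establishing Corollary~\ref{se4-cor1} (which in turn rested on the identification $L/Z(L)\cong \mathrm{U}_3(q^m)$ from Theorem~\ref{central_quotient_recognition}), has already been carried out. The only mild care required is to track bookkeeping to confirm that the modified generators still realize the presentation of Lemma~\ref{presentation_class3_(0,m)_Liealgebra} with the \emph{same} $\kappa_{i,j,t}$, which follows from the observation above that the correction terms are central modulo $L'$.
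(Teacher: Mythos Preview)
Your proposal is correct and follows exactly the paper's approach: apply Corollary~\ref{se4-cor1} to each pair $(x_1,x_j)$ and $(y_1,y_j)$, then replace $x_j$ and $y_j$ by the corrected elements. You supply more bookkeeping than the paper (which simply asserts the replacement is harmless), but the argument is the same.
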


	\begin{proof}
		Let $j$ be such that $2\le j\le m$. Since $x_1,x_j\notin L'$ and
		$[x_1,x_j]\in Z(L)$, by Corollary
		\ref{se4-cor1}, there exists $h'_j\in L'$ such that $[x_1,x_j+h'_j]=0$. Thus replacing $x_j$ by $x_j+h'_j$, we can assume that $[x_1,x_j]=0$. Similarly we can choose $y_j$'s such that $[y_1,y_j]=0$.
	\end{proof}

	Now we will determine the constants in the presentation of $L$ (as in Lemma~\ref{presentation_class3_(0,m)_Liealgebra}). We will see that these constants can be determined uniquely in terms of the structure constants $\kappa_{i,j,l}$'s of $\mathrm{U}_3(q^m)$. We start with the constants $\gamma$'s and $\delta$'s.
	
	\begin{theorem}\label{uniqueness_gamma_delta} 
		Let $L$ be as in Lemma~\ref{presentation_class3_(0,m)_Liealgebra}. For $1\leq i,j\leq m$, we have
		$$ [h_i,x_j]=\sum_{t=1}^{m}\kappa_{i,j,t}z_t \text{  and  }
		[h_i,y_j]=\sum_{t=1}^{m}\kappa_{i,j,t}z_{m+t}.$$
		In particular, $\gamma$'s and $\delta$'s in the relations (R3) and
		(R4) of Lemma~\ref{presentation_class3_(0,m)_Liealgebra} are uniquely determined by the structure constants $\kappa_{i,j,l}$ of ${\rm U}_3(q^m)$.
	\end{theorem}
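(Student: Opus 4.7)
The plan is to evaluate $[h_i,x_j]$ and $[h_i,y_j]$ by repeated application of the Jacobi identity, exploiting (R7), the identification $[x_1,y_i]=h_i$, the vanishing relations $[x_1,x_j]=0=[y_1,y_j]$ from Lemma~\ref{suitable_generators}, and the expansion (R8) of $[x_i,y_j]$ in terms of the $h_t$'s modulo $Z(L)$. The symmetry $\kappa_{i,j,t}=\kappa_{j,i,t}$ recorded in \eqref{eqse5-6} will take care of the index transposition that arises in the intermediate formulas.

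My first step is to compute the boundary brackets $[h_t,x_1]$ and $[h_t,y_1]$ for $1\le t\le m$. Since $\alpha^{t+1-2}=\alpha^{t-1}$ is itself a basis element of $\F_{q^m}$ over $\F_q$, the structure constants satisfy $\kappa_{t,1,l}=\delta_{tl}$, so (R8) gives $[x_t,y_1]=h_t+w_t$ for some $w_t\in Z(L)$. Bracketing with $x_1$ annihilates $w_t$, so $[h_t,x_1]=[[x_t,y_1],x_1]$; by the Jacobi identity together with $[x_1,x_t]=0$ this becomes $[[x_1,y_1],x_t]=[h_1,x_t]=z_t$. A parallel calculation starting from $h_t=[x_1,y_t]$ and using $[y_t,y_1]=0$ yields $[h_t,y_1]=[h_1,y_t]=z_{m+t}$.

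For the general brackets, write $h_i=[x_1,y_i]$ and apply Jacobi together with $[x_j,x_1]=0$ to obtain $[h_i,x_j]=[[x_j,y_i],x_1]$. Expanding $[x_j,y_i]$ by (R8), the central terms are killed by the outer bracket with $x_1$, so
$$[h_i,x_j]=\sum_{t=1}^{m}\kappa_{j,i,t}\,[h_t,x_1]=\sum_{t=1}^{m}\kappa_{i,j,t}\,z_t,$$
using $\kappa_{j,i,t}=\kappa_{i,j,t}$. For $[h_i,y_j]$, the same approach starting from $h_i=[x_1,y_i]$ produces only the symmetry $[h_i,y_j]=[h_j,y_i]$; instead I use the identity $h_i\equiv[x_i,y_1]\pmod{Z(L)}$ from the first step. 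Then Jacobi applied to $[[x_i,y_1],y_j]$ together with $[y_1,y_j]=0$ gives $[h_i,y_j]=[[x_i,y_j],y_1]$, and expanding $[x_i,y_j]$ via (R8) together with $[h_t,y_1]=z_{m+t}$ yields $[h_i,y_j]=\sum_{t=1}^{m}\kappa_{i,j,t}\,z_{m+t}$.

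The main subtlety is choosing the right presentation of $h_i$ in each computation: $h_i=[x_1,y_i]$ is well suited to $[h_i,x_j]$ because the ``cross'' commutator $[x_1,x_j]$ vanishes, but for $[h_i,y_j]$ one must switch to $h_i\equiv[x_i,y_1]$ in order to use the dual vanishing $[y_1,y_j]=0$. Once the right splitting is chosen, the rest is a routine calculation, and since $\{z_1,\ldots,z_{2m}\}$ is a basis of $Z(L)$, these formulas pin down $\gamma_{i,j,l}$ and $\delta_{i,j,l}$ uniquely in terms of the $\kappa_{i,j,t}$.
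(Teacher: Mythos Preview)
Your proof is correct and follows essentially the same approach as the paper: both arguments use the Jacobi identity together with the vanishing relations $[x_1,x_j]=0=[y_1,y_j]$ from Lemma~\ref{suitable_generators}, the identities $h_i=[x_1,y_i]$ and $h_i\equiv[x_i,y_1]\pmod{Z(L)}$, and the expansion (R8) of $[x_i,y_j]$ modulo $Z(L)$. The only difference is organizational: you first isolate the boundary identities $[h_t,x_1]=z_t$ and $[h_t,y_1]=z_{m+t}$ and then invoke them directly in the general case, whereas the paper achieves the same reduction inline by applying the Jacobi-type identity \eqref{eqse3-5a} a second time to pass from $[[x_j,y_i],x_1]$ through $\big[\sum_t\kappa_{i,j,t}x_t,y_1\big]$ back to $[h_1,x_t]$.
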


	\begin{proof}
		We have $[x_1,y_i]=h_i$. By the previous lemma, $[x_1,x_j] = 0 \in Z(L)$ and hence using \eqref{eqse3-5a}, we get
		$$[h_i,x_j]=[[x_1,y_i], x_j]=[[x_j,y_i],x_1].$$
		
		Analogous to the relation $[x_1,y_i]=h_i$, it can also be shown that $y_i$'s can be chosen such that $[x_i,y_1]=h_i$. By (R8) of Lemma~\ref{presentation_class3_(0,m)_Liealgebra}, we get that $[x_j,y_i]=\sum\limits_{t=1}^{m}\kappa_{i,j,t}h_t + Z(L)$. We can now conclude that for $1\leq i,j\leq m$, 
		$$[x_j,y_i]=\sum_{t=1}^{m}\kappa_{i,j,t}[x_t,y_1]=\left[\sum_{t=1}^{m}\kappa_{i,j,t}x_t,y_1\right]$$ 
		modulo the center $Z(L)$.
	  	Once again using previous lemma, we have that 
	  	$$\left[x_1, \sum_{t=1}^{m}\kappa_{i,j,t}x_t\right] =0 \in Z(L).$$
	  	Using \eqref{eqse3-5a} we get, 
		
		\begin{eqnarray*}
		[h_i,x_j]&=&[[x_j,y_i],x_1] = \left[\left[\sum_{t=1}^{m}\kappa_{i,j,t}x_t,y_1\right],x_1\right] \\ &=&
		\left[[x_1,y_1], \sum_{t=1}^{m}\kappa_{i,j,t}x_t\right]=\sum_{t=1}^{m}\kappa_{i,j,t}[h_1,x_t]=\sum_{t=1}^{m}\kappa_{i,j,t}z_t.
		\end{eqnarray*}
		The last equality follows from (R7) of Lemma~\ref{presentation_class3_(0,m)_Liealgebra}. This proves one of the assertions. The other assertion follows from similar computations.
	\end{proof}

	\begin{lemma}\label{separation_of_center}
		In the Lie algebra $L$, for $1 \le i, j \le m$, we have 
		$$[x_i,x_j] \in \gen{z_1,\ldots, z_m},\,\,\,\, \mbox{ and } \,\,\,\,
		[y_i,y_j] \in \gen{z_{m+1},\ldots, z_{2m}}.$$
	\end{lemma}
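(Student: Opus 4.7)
The plan is to derive the lemma from the breadth hypothesis together with the identification $L/Z(L)\cong \mathrm{U}_3(q^m)$ established in Theorem~\ref{central_quotient_recognition}. Fix $i$ with $1\le i\le m$. Since $x_i\notin L'\supseteq Z(L)$, the breadth hypothesis gives $\dim[x_i,L]=2m$. Setting $\overline{L}=L/Z(L)$, one has the obvious equality $\overline{[x_i,L]}=[\overline{x}_i,\overline{L}]$, and since $\overline{L}\cong \mathrm{U}_3(q^m)$ is a $2$-step Camina Lie algebra with $\overline{x}_i\notin \overline{L}'$, Theorem~\ref{central_quotient_recognition} yields $[\overline{x}_i,\overline{L}]=\overline{L}'$, which has dimension $m$. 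Therefore
\[\dim\bigl([x_i,L]\cap Z(L)\bigr)=\dim[x_i,L]-\dim\overline{[x_i,L]}=2m-m=m.\]

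Next I would compute $[x_i,L]\cap Z(L)$ explicitly using the presentation of Lemma~\ref{presentation_class3_(0,m)_Liealgebra}. Write a general $w\in L$ as $w=\sum_k r_k x_k+\sum_j s_j y_j+w'$ with $w'\in L'$. Since $[x_i,x_k]$ and $[x_i,w']$ are central, the obstruction to $[x_i,w]$ lying in $Z(L)$ is entirely the term $\sum_j s_j[x_i,y_j]$, which modulo $Z(L)$ equals $\sum_j s_j\sum_t\kappa_{i,j,t}h_t$ by relation (R8). The $m\times m$ matrix $K^{(i)}=(\kappa_{i,j,t})_{j,t}$ is invertible, because its $j$-th row is the $\mathbb{F}_q$-coordinate vector of $\alpha^{i-1}\cdot\alpha^{j-1}$ with respect to $\{1,\alpha,\ldots,\alpha^{m-1}\}$, and multiplication by $\alpha^{i-1}\in\mathbb{F}_{q^m}^{\times}$ sends this basis to a basis. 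Thus $[x_i,w]\in Z(L)$ forces $s_j=0$ for every $j$, and then
\[[x_i,w]=\sum_k r_k[x_i,x_k]+[x_i,w'].\]
By Theorem~\ref{uniqueness_gamma_delta} one has $[x_i,h_j]=-\sum_t\kappa_{i,j,t}z_t$, and a second application of the invertibility of $K^{(i)}$ gives $[x_i,L']=\langle z_1,\ldots,z_m\rangle$. Combining these identifications,
\[[x_i,L]\cap Z(L)=\langle z_1,\ldots,z_m\rangle+\bigl\langle [x_i,x_k]:1\le k\le m\bigr\rangle.\]

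The conclusion is now dimensional: the left-hand side has dimension $m$, while the subspace $\langle z_1,\ldots,z_m\rangle$ on the right already accounts for $m$ dimensions, so no new element can be contributed by the $[x_i,x_k]$. This forces $[x_i,x_k]\in\langle z_1,\ldots,z_m\rangle$ for every $k$, proving the first inclusion. The second inclusion is symmetric: for $y_i$ one again uses $\dim[y_i,L]=2m$ and the Camina property to get $\dim([y_i,L]\cap Z(L))=m$, uses the invertibility of the same $K^{(i)}$ (via the relation $[y_i,x_k]\equiv -\sum_t\kappa_{k,i,t}h_t\pmod{Z(L)}$) to eliminate the $r_k$'s, and uses the identification $[y_i,L']=\langle z_{m+1},\ldots,z_{2m}\rangle$ coming from the formula for $[h_j,y_i]$ in Theorem~\ref{uniqueness_gamma_delta}.

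The only point requiring care is the invertibility of $K^{(i)}$, which is precisely where the field-extension structure $\mathbb{F}_{q^m}/\mathbb{F}_q$ enters; once this linear-algebra fact is in hand the rest of the argument is a clean bookkeeping of dimensions.
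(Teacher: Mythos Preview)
Your proof is correct but takes a more roundabout route than the paper. The paper's argument is a two-line application of Corollary~\ref{se4-cor1}: since $[x_i,x_j]\in Z(L)$ with $x_i,x_j\in L\setminus L'$, that corollary furnishes $h\in L'$ with $[x_i,x_j+h]=0$, whence $[x_i,x_j]=-[x_i,h]=[h,x_i]$; and since $h$ is a combination of $h_1,\ldots,h_m$ modulo $Z(L)$, Theorem~\ref{uniqueness_gamma_delta} gives $[h,x_i]\in\langle z_1,\ldots,z_m\rangle$ directly. Your approach instead uses the breadth hypothesis and the Camina property of $L/Z(L)$ to compute $\dim\bigl([x_i,L]\cap Z(L)\bigr)=m$, then shows this intersection contains both $[x_i,L']=\langle z_1,\ldots,z_m\rangle$ and all the $[x_i,x_k]$, forcing the latter into the former by dimension. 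In effect you are re-deriving the content of Corollary~\ref{se4-cor1} (namely that $[x_i,L]\cap Z(L)\subseteq [x_i,L']$) via a dimension count rather than quoting it. The paper's route is shorter and needs no explicit linear algebra; yours is more self-contained and makes visible the role of the invertibility of the multiplication-by-$\alpha^{i-1}$ matrix $K^{(i)}$, which the paper leaves implicit.
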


	\begin{proof}
		Fix $i,j$ with $1\le i,j\leq m$. Since $[x_i,x_j]\in Z(L)$, by
		Corollary  \ref{se4-cor1}, there exists $h\in L'$ (depending on $x_i,x_j$) such that
		$[x_i,x_j+h]=0$. Then
		$$[x_i,x_j]=-[x_i,h]=[h,x_i].$$
		Since $L'=\langle h_1,h_2,\ldots,h_m,Z(L)\rangle$, by Theorem \ref{uniqueness_gamma_delta}, $[h,x_i]\in\langle z_1,\ldots, z_m\rangle$, proving the first assertion. The second assertion follows similarly.
	\end{proof}

	\begin{theorem}\label{lambdaequals0}
		We have $\lambda_{i,j,l}=0$ for all $1\leq i,j\leq m$ and $1\leq l\leq 2m$. In other words, the relation (R8) in Lemma~\ref{presentation_class3_(0,m)_Liealgebra} reduces to the relation $[x_i,y_j]=\sum_{t=1}^{m}\kappa_{i,j,t}h_t$ for every $1\leq i,j\leq m$.
	\end{theorem}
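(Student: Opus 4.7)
The approach is to apply Corollary~\ref{se4-cor1} to carefully chosen pairs $(u,v)$ of elements of $L\setminus L'$ whose bracket image in $L/Z(L)\cong \mathrm{U}_3(q^m)$ vanishes; for every such pair the corollary forces $[u,v]\in [u,L']$, and comparing explicit expansions yields linear constraints on the $\lambda_{i,j,l}$. Combined with the concrete Lie algebra $\mathfrak{g}_m$ from Section~\ref{section 3} --- which by Theorem~\ref{example_3_step_nilpotent_over_finite_fields} realizes the same abstract presentation with all $\lambda=0$ --- these constraints will be shown to kill every $\lambda_{i,j,l}$.

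The first family to try is $u=x_k+sy_k,\;v=x_l+sy_l$ for $s\in\F_q$. Since $[\bar u,\bar v]=0$ in $\mathrm{U}_3(q^m)$, we have $[u,v]\in Z(L)$. Expanding $[u,v]$ via relations (R3)--(R6) and Lemma~\ref{separation_of_center}, and computing $[u,L']=\{\sum_t d_t[u,h_t]:d_t\in\F_q\}$ using Theorem~\ref{uniqueness_gamma_delta}, the containment becomes a polynomial identity in $s$ of degree at most three. As $q\geq 3$ provides enough values of $s$, this pins $\alpha_{k,l,l'}=\beta_{k,l,l'}=0$ together with the symmetry $\lambda_{k,l,l'}=\lambda_{l,k,l'}$. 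Iterating with $u=x_i+y_j,\;v=x_{i+k}+y_{j+k}$ (whenever the indices stay in $[1,m]$) and translating the resulting condition into $\F_{q^m}$ via $L'/Z(L)\cong\F_{q^m}$, we obtain that the generating polynomials $\Lambda_{a,b}(\alpha):=\sum_l\lambda_{a,b,l}\alpha^{l-1}$ and $\mu_{a,b}(\alpha):=\sum_l\lambda_{a,b,m+l}\alpha^{l-1}$ depend only on $a+b$.

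Finally, the normalizations $[x_1,y_j]=h_j$ from (R7) of Lemma~\ref{presentation_class3_(0,m)_Liealgebra} and the companion $[x_i,y_1]=h_i$ established in the proof of Theorem~\ref{uniqueness_gamma_delta} force $\Lambda_{1,j}=\mu_{1,j}=0$; by the ``depends only on $a+b$'' invariance we obtain $\Lambda_{a,b}=\mu_{a,b}=0$ whenever $a+b\leq m+1$. For $a+b\geq m+2$ I would compare directly with $\mathfrak{g}_m$: on the generators $X_i=(\alpha^{i-1},0,0,0,0),\;Y_j=(0,\alpha^{j-1},0,0,0)$, a direct expansion of the bracket defined in Section~\ref{section 3} gives $[X_i,Y_j]=\sum_t\kappa_{i,j,t}H_t$ with no central $z$-component, so the $\lambda$'s vanish identically in $\mathfrak{g}_m$; since all the constraints built up above apply equally to $\mathfrak{g}_m$ and to $L$, the remaining values in $L$ must also vanish. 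The principal obstacle is the second step's $\F_{q^m}$-bookkeeping: translating $[u,v]\in[u,L']$ into an identity in $\F_{q^m}$ requires identifying the matrix $(\kappa_{t,i,s})_{t,s}$ with multiplication by $\alpha^{i-1}$, and exploiting both the symmetry $\kappa_{i,j,t}=\kappa_{j,i,t}$ and the non-vanishing of $\alpha^{j-i}-\alpha^{i-j}$ for generic $i\neq j$ to extract the ``depends only on $a+b$'' conclusion.
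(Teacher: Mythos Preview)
Your proposal has a genuine logical gap in the final step. You argue that ``since all the constraints built up above apply equally to $\mathfrak{g}_m$ and to $L$, the remaining values in $L$ must also vanish.'' This is a non-sequitur: you have derived a system of constraints $C$ that any $3$-step nilpotent stem Lie algebra of breadth type $(0,2m)$ must satisfy, and you have observed that $\mathfrak{g}_m$ satisfies $C$ with all $\lambda=0$. But this only shows that $\lambda=0$ is \emph{one} solution of $C$; it does not show that $C$ has a \emph{unique} solution. To conclude that $L$ also has $\lambda=0$ you would need to prove that the constraints you have accumulated determine the $\lambda_{i,j,l}$ uniquely, and this is precisely what your argument for the range $a+b\geq m+2$ does not supply. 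In effect you are assuming the uniqueness statement of Theorem~\ref{even_dimension_three_step_nilpotent} inside the proof of one of its key lemmas.

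There is also a smaller issue: the normalization $[x_i,y_1]=h_i$ that you invoke from the proof of Theorem~\ref{uniqueness_gamma_delta} is not simultaneously available with the normalization $[x_1,y_j]=h_j$ already fixed in (R7); that passage in the paper refers to an \emph{alternative} choice of generators, not an additional relation in the present presentation. So you cannot freely use $\lambda_{i,1,l}=0$ on top of $\lambda_{1,j,l}=0$.

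The paper's argument avoids all of this by working with a different family: for fixed $i,j$ set $y=\sum_t\kappa_{i,j,t}y_t$ and consider $u=x_1+ry_j$, $v=x_i+ry$ with $r$ ranging over $\F_q^*$. Corollary~\ref{se4-cor1} produces $\theta(r)\in L'$ with $[u,v+\theta(r)]=0$; separating the $\langle z_1,\dots,z_m\rangle$ and $\langle z_{m+1},\dots,z_{2m}\rangle$ components via Theorem~\ref{uniqueness_gamma_delta} shows $\theta(r)\equiv r\tilde h\pmod{Z(L)}$ for a fixed $\tilde h$, whence $\sum_{l>m}\lambda_{i,j,l}z_l=r[y_j,\tilde h]$ for every $r\in\F_q^*$. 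Summing over $r$ and using $\sum_{r\in\F_q^*}r=0$ (here $q\geq 3$) kills the right side, so $\lambda_{i,j,l}=0$ for $l>m$; a symmetric argument with $u=y_j+rx_1$ handles $l\leq m$. This is self-contained and makes no appeal to $\mathfrak{g}_m$.
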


	\begin{proof}
		From relation (R8) in Lemma~\ref{presentation_class3_(0,m)_Liealgebra}, we have
		$$[x_i,y_j]=\sum_{t=1}^{m}\kappa_{i,j,t}h_t+ \sum_{l=1}^{2m} \lambda_{i,j,l}z_l.$$
		Since $[x_1,y_i]=h_i$, the above can be written as 
		$$[x_i,y_j]=[x_1,\sum_{t=1}^{m}\kappa_{i,j,t}y_t]+ \sum_{l=1}^{2m} \lambda_{i,j,l}z_l.$$
		Let $y=\sum_{t=1}^{m}\kappa_{i,j,t}y_t$. Then we can conclude that $[x_i,y_j]-[x_1,y]\in Z(L)$. By Lemma~\ref{suitable_generators}, for every $r\in \F_q\setminus \{0\}$, we get $[x_1+ry_j,x_i+ry]=0$. By Corollary~\ref{se4-cor1}, there exists $\theta(r)\in L'$ (depending on $r$) such that $[x_1+ry_j,x_i+ry+\theta(r)]=0.$ Once again using Lemma~\ref{suitable_generators}, we have
		\begin{eqnarray*}
			&& [x_1+ry_j,x_i+ry+\theta(r)]=0 \implies r[x_1,y]+[x_1,\theta(r)]+r[y_j,x_i]+r[y_j,\theta(r)]=0 \\
			&& \implies r([x_i,y_j]-[x_1,y])=[x_1,\theta(r)] + r[y_j,\theta(r)]\\ &&\implies \sum_{l=1}^{2m}  r\lambda_{i,j,l}z_l=[x_1,\theta(r)] + r[y_j,\theta(r)].
		\end{eqnarray*}
		Using Theorem~\ref{uniqueness_gamma_delta}, we can conclude that
		\begin{equation}\label{maineq}
			[x_1,\theta(r)]=r\sum_{l=1}^{m}  \lambda_{i,j,l}z_l,\;\;\text{ and }\;\;[y_j,\theta(r)]=\sum_{l=m+1}^{2m}  \lambda_{i,j,l}z_l.
		\end{equation}
		Since $[h_i,x_1]=z_1$, by Theorem~\ref{uniqueness_gamma_delta}, we have
		\begin{eqnarray*}
			\sum_{l=1}^{m}  \lambda_{i,j,l}z_l&=&\sum_{l=1}^{m}  \lambda_{i,j,l}[h_l,x_1]\\ &=& \sum_{l=1}^{m}[\lambda_{i,j,l}h_l,x_1]=[\tilde{h},x_1]
		\end{eqnarray*}
		where $\tilde{h}=\sum_{l=1}^{m}\lambda_{i,j,l}h_l$. From the first relation in \eqref{maineq} we get $[x_1,\theta(r)]=r[\tilde{h},x_1]$. This gives $[x_1,\theta(r)-r\tilde{h}]=0$. Thus, $\theta(r)-r\tilde{h}\in C_L(x_1)\cap L'=Z(L)$ (by Corollary~\ref{centralizer_info_L}). Putting this in the second relation of \eqref{maineq} we get
		$$\sum_{l=m+1}^{2m}  \lambda_{i,j,l}z_l=[y_j,\theta(r)]=r[y_j,\tilde{h}].$$
		Note that $\tilde{h}$ is not dependent on the choice of $r$. From the above equation we get
		$$(q-1)\sum_{l=m+1}^{2m}  \lambda_{i,j,l}z_l=\sum_{r\in \F_q\setminus \{0\}}r[y_j,\tilde{h}]=0.$$
		The last equality follows since $\sum\limits_{r\in \F_q\setminus \{0\}}r=0$ when $q\geq 3$. Note that the LHS of the above equation does not vanish since $q$ is a prime-power and hence $q-1\neq 0$ in $\F_q$.
		Since $z_1,z_2,\ldots,z_{2m}$ are linearly independent we conclude that $\lambda_{i,j,l}=0$ for all $1\leq i,j\leq m$ and $m+1\leq l\leq 2m$. Thus the relation in (R8) reduces to 
		$$[x_i,y_j]=\sum_{t=1}^{m}\kappa_{i,j,t}h_t+ \sum_{l=1}^{m} \lambda_{i,j,l}z_l.$$
		To prove that $\lambda_{i,j,l}=0$ for $1\leq i,j,l \leq m$ we argue along similar lines. Once again taking $y=\sum_{t=1}^{m}\kappa_{i,j,t}y_t$ we have $[x_i,y_j]-[x_1,y]\in Z(L)$.  By Lemma~\ref{suitable_generators}, for every $r\in \F_q\setminus\{0\}$, we get $[y_j+rx_1,y+rx_i]=0$. By Corollary~\ref{se4-cor1}, there exists $\theta(r)\in L'$ (depending on $r$) such that $[y_j+rx_1,y+rx_i+\theta(r)]=0.$ Using Lemma~\ref{suitable_generators} we have
		\begin{eqnarray*}
			&& \;\;\;\;\;\;\;\;\;\;[y_j+rx_1,y+rx_i+\theta(r)]=0 \\ && \implies r[y_j,x_i]+[y_j,\theta(r)]+r[x_1,y]+r[x_1,\theta(r)]=0 \\ &&
			\implies r([x_i,y_j]-[x_1,y])=r[x_1,\theta(r)]+[y_j,\theta(r)] \\ &&
			\implies r\sum_{l=1}^{m}\lambda_{i,j,l}z_l=r[x_1,\theta(r)]+[y_j,\theta(r)].
		\end{eqnarray*}
		By Theorem~\ref{uniqueness_gamma_delta}, we conclude that $[y_j,\theta(r)]=0$ which implies that $\theta(r)\in C_L(y_j)\cap L'=Z(L)$ (by Corollary~\ref{centralizer_info_L}). This gives $[x_1,\theta(r)]=0$. Consequently, $\sum_{l=1}^{m}\lambda_{i,j,l}z_l=0$, whence we conclude that $\lambda_{i,j,l}=0$ for every $1\leq i,j,l\leq m$. This completes the proof.
	\end{proof}
	
	Finally we deal with the remaining two pairs of constants $\alpha$'s and $\beta$'s.
	\begin{theorem}\label{alpha_beta_0}
		 In the relations (R5)-(R6) of Lemma~\ref{presentation_class3_(0,m)_Liealgebra}, we have $\alpha_{i,j,l}=0$  and $\beta_{i,j,l}=0$ for every $1 \le
		i, j \le m$, $1 \le l \le 2m$. In other words, $[x_i,x_j]=[y_i,y_j]=0$ for every $1\leq i,j\leq m$.
	\end{theorem}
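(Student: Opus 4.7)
The plan is to adapt the strategy of Theorem~\ref{lambdaequals0}: package both families of coefficients $\alpha_{i,j,\ast}$ and $\beta_{i,j,\ast}$ into a single bracket that lies in $Z(L)$, apply Corollary~\ref{se4-cor1} to introduce a correction in $L'$, and then squeeze out the vanishing by letting a scalar parameter $r$ vary. By antisymmetry of the bracket it suffices to treat $i\ne j$, and by Lemma~\ref{suitable_generators} we may further assume $i,j\ge 2$. For any such $i,j$ and any $r\in\F_q$, a direct expansion gives
\[
[x_i + r y_i,\, x_j + r y_j] \;=\; [x_i,x_j] + r\bigl([x_i,y_j] + [y_i,x_j]\bigr) + r^2[y_i,y_j],
\]
which simplifies to $[x_i,x_j] + r^2[y_i,y_j]$ because Theorem~\ref{lambdaequals0} combined with the symmetry $\kappa_{i,j,t}=\kappa_{j,i,t}$ forces the middle term to vanish. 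By Lemma~\ref{separation_of_center} this bracket lies in $Z(L)$.

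I would then invoke Corollary~\ref{se4-cor1} to produce $\theta(r)\in L'$ with $[x_i+ry_i,\, x_j+ry_j+\theta(r)]=0$, and write $\theta(r)\equiv\sum_{k=1}^m b_k(r)h_k\pmod{Z(L)}$. Theorem~\ref{uniqueness_gamma_delta} then gives
\[
[x_i,\theta(r)] = -\sum_{s=1}^m\Bigl(\sum_k b_k(r)\kappa_{k,i,s}\Bigr) z_s, \qquad [y_i,\theta(r)] = -\sum_{s=1}^m\Bigl(\sum_k b_k(r)\kappa_{k,i,s}\Bigr) z_{m+s},
\]
so that when the equation $0=[x_i+ry_i,\,x_j+ry_j+\theta(r)]$ is expanded, the two summands $\langle z_1,\ldots,z_m\rangle$ and $\langle z_{m+1},\ldots,z_{2m}\rangle$ of $Z(L)$ decouple, thanks again to Lemma~\ref{separation_of_center}.

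Matching coefficients of $z_s$ and $z_{m+s}$ for $1\le s\le m$ yields the pair of identities
\[
\alpha_{i,j,s} \;=\; \sum_k b_k(r)\kappa_{k,i,s} \;=\; r\,\beta_{i,j,m+s},
\]
so that $\alpha_{i,j,s} = r\,\beta_{i,j,m+s}$ for every $r\in\F_q\setminus\{0\}$. Since $q\ge 3$ supplies at least two distinct nonzero scalars, subtracting the equations arising from two such choices of $r$ forces $\beta_{i,j,m+s}=0$, and consequently $\alpha_{i,j,s}=0$. The one subtle point is tracking how the extra factor $r$ in front of $[y_i,\theta(r)]$ (as opposed to $[x_i,\theta(r)]$) propagates through the computation to produce the linear-versus-constant-in-$r$ mismatch that makes the hypothesis $q\ge 3$ bite; once this is in hand the theorem follows.
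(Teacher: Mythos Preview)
Your proposal is correct and follows essentially the same route as the paper: form $[x_i+ry_i,\,x_j+ry_j+\theta(r)]=0$ via Corollary~\ref{se4-cor1}, use Theorem~\ref{uniqueness_gamma_delta} and Lemma~\ref{separation_of_center} to decouple the $\langle z_1,\ldots,z_m\rangle$ and $\langle z_{m+1},\ldots,z_{2m}\rangle$ parts, and then exploit the constant-versus-linear dependence on $r$ together with $q\ge3$. The only cosmetic differences are that the paper packages the $r$-independence of your $c_s(r)=\sum_k b_k(r)\kappa_{k,i,s}$ into an element $\theta'\in L'$ with $[x_i,x_j]=[x_i,\theta']$ and $\theta(r)\equiv-\theta'\pmod{Z(L)}$, and then sums $[y_i,\theta']=r[y_i,y_j]$ over all $r\in\F_q^\times$ (rather than comparing two values of $r$) to force $\theta'\in Z(L)$; this yields $[x_i,x_j]=0$ first, with $[y_i,y_j]=0$ handled symmetrically, whereas your coordinate version dispatches both at once.
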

	
	\begin{proof}
		Note that from the previous theorem, $[x_i,y_j]=[x_j,y_i]$ for all $1\leq i,j\leq m$. Let $r\in \F_q\setminus\{0\}$. We get $[x_i+ry_i,x_j+ry_j]\in Z(L)$. By Corollary~\ref{se4-cor1}, we conclude that there exists $\theta(r)\in L'$ such that $[x_i+ry_i, x_j+ry_j+\theta(r)]=0$. This yields 
		$$[x_i,x_j]+[x_i,\theta(r)]+r^2[y_i,y_j]+r[y_i,\theta(r)]=0.$$
		From Theorem~\ref{uniqueness_gamma_delta} and Lemma~\ref{separation_of_center}, we conclude that
		\begin{equation}\label{se5_5.6_eqn1}
			[x_i,x_j]+[x_i,\theta(r)]=0\;\text{ and }\; r[y_i,y_j]+[y_i,\theta(r)]=0.
		\end{equation}
		
		Since $[h_1,x_i], [h_2,x_i],\ldots, [h_m,x_i]$ are independent and belong to $\langle z_1,z_2,\ldots,z_m\rangle$ (see Theorem \ref{uniqueness_gamma_delta}), we get
		\begin{align*}
			\langle z_1,z_2,\ldots, z_m\rangle =\langle [h_1,x_i], [h_2,x_i],\ldots.
			[h_m,x_i]\rangle.
		\end{align*}
		Thus, by Lemma~\ref{separation_of_center}, we conclude that $[x_i,x_j]=[x_i,\theta']$ for some $\theta'\in L'$. From the first equation in \eqref{se5_5.6_eqn1} we have $[x_i,\theta'+\theta(r)]=0$, whence it follows that $\theta'+\theta(r)\in Z(L)$. Note that $\theta'$ does not depend on the value of $r$. From the second equation in \eqref{se5_5.6_eqn1} we get $[y_i,\theta']=r[y_i,y_j]$ for every $r\in \F_q^{*}$. Hence
		$$(q-1)[y_i,\theta']=\sum_{r\in \F_q\setminus\{0\}}r[y_i,y_j]=0,$$
		since $\sum\limits_{r\in \F_q\setminus\{0\}}r=0$ when $q\geq 3$. It follows that $[y_i,\theta']=0$ and hence $\theta'\in C_L(y_i)\cap L'=Z(L)$. Thus $[x_i,x_j]=[x_i,\theta']=0$ and the result follows. The proof for $\beta$'s follow along similar lines.
	\end{proof}
	
	We are now ready to prove Theorem~\ref{even_dimension_three_step_nilpotent}.
	\begin{proof}[\textbf{Proof of 	Theorem~\ref{even_dimension_three_step_nilpotent}}]
		Let $L$ be a 3-step nilpotent stem Lie algebra over $\F_q$ of breadth type $(0,2m)$ where $m\geq 1$. Note that when $m=1$, that is, when $L$ is 3-step nilpotent stem Lie algebra of breadth type $(0,2)$ (see Theorem 5.1, \cite{kns}), then $L$ is isomorphic to the Lie algebra $\mathcal{V}$ defined by the presentation 
		$$\mathcal{V}=\langle x_1,x_2,y,z_1,z_2\mid [x_1,x_2]=y,\;[x_1,y]=z_1,\; [x_2,y]=z_2\rangle.$$
		Thus, there is a unique such stem Lie algebra. Now we assume $m\geq 2$. Up to isomorphism, Theorem~\ref{uniqueness_gamma_delta}, Theorem~\ref{lambdaequals0} and Theorem~\ref{alpha_beta_0} uniquely determine the presentation of $L$ (once we fix the structure constants of $\mathrm{U}_3(q^m)$ which is given in Lemma~\ref{presentation_class3_(0,m)_Liealgebra}). Thus, $L$ is isomorphic to a Lie algebra $\mathcal{V}$ which is  given as follows:
		\begin{align*} 
			&\mathcal{V}=\Big{\langle}  x_1,\ldots, x_m,y_1,\ldots, y_m, h_1, h_2,\ldots, h_m, z_1,
			z_2, \ldots, z_{2m}~~ \Big{|} [h_i,x_j]=\sum_{t=1}^{m}\kappa_{i,j,t}z_t,\\
			& [h_i,y_j]=\sum_{t=1}^{m}\kappa_{i,j,t}z_{m+t},\; [x_i,y_j]=\sum_{t=1}^{m}\kappa_{i,j,t}h_t \;(1\leq i,j\leq m) \Big{\rangle}; 
		\end{align*}
		where   $\kappa_{i,j,l}$,  $1 \le i, j, l \le m$, are the structure constants of ${\rm U}_3(q^m)$.
		Note that in the above presentation, the Lie brackets we do not write are zero. Once again we get a unique Lie algebra up to isomorphism. The proof is now complete.
	\end{proof}
	
	\begin{proof}[\textbf{Proof of Corollary~\ref{Corr}}]
		The proof follows from Theorem~\ref{even_dimension_three_step_nilpotent} and Theorem~\ref{example_3_step_nilpotent_over_finite_fields}.
	\end{proof}

	\bibliographystyle{amsalpha}
	\bibliography{References}	
\end{document}